\numberwithin{equation}{section}
\tikzset{
  curarrow/.style={
  rounded corners=8pt,
  execute at begin to={every node/.style={fill=red}},
    to path={-- ([xshift=-50pt]\tikztostart.center)
    |- (#1) node[fill=white] {$\scriptstyle \delta$}
    -| ([xshift=50pt]\tikztotarget.center)
    -- (\tikztotarget)}\
    }
}
\newtheorem{theorem}{Theorem}[section]
\newtheorem{corollary}[theorem]{Corollary}
\newtheorem{lemma}[theorem]{Lemma}
\newtheorem{proposition}[theorem]{Proposition}
\theoremstyle{definition}
\newtheorem{example}[theorem]{Example}
\newtheorem{remark}[theorem]{Remark}
\def\opn#1#2{\def#1{\operatorname{#2}}} 
\opn\Cl{Cl} \opn\pdim{pdim} \opn\Im{Im} \opn\Ker{Ker} \opn\ini{in} \opn\typ{type}
\begin{document}

\title{$h$-vectors of edge rings of odd-cycle compositions}
\author{Kieran Bhaskara}
\address{Department of Mathematics and Statistics, McMaster University, Hamilton, Ontario L8S 4L8, Canada}
\email{kieran.bhaskara@mcmaster.ca}
\author{Akihiro Higashitani}
\address{Department of Pure and Applied Mathematics, Graduate School of Information Science and Technology, Osaka University, Suita, Osaka 565-0871, Japan}
\email{higashitani@ist.osaka-u.ac.jp}
\author{Nayana Shibu Deepthi}
\address{Department of Pure and Applied Mathematics, Graduate School of Information Science and Technology, Osaka University, Suita, Osaka 565-0871, Japan}
\email{nayanasd@ist.osaka-u.ac.jp}

\keywords{Edge rings, toric ideals, Gr\"{o}bner basis, simplicial complexes, initial complex, $h$-vector, $h$-polynomial, odd-cycle condition}
\subjclass[2020]{Primary: 13D40, Secondary: 13P10, 13F55, 13F65, 05E40}

\begin{abstract}
Let $\mathbb{K}[G]$ be the edge ring of a finite simple graph $G$. Investigating properties of the $h$-vector of $\mathbb{K}[G]$ is of great interest in combinatorial commutative algebra. However, there are few families of graphs for which the $h$-vector has been explicitly determined. In this paper, we compute the $h$-vectors of a certain family of graphs that satisfy the odd-cycle condition, generalizing a result of the second and third named authors. As a corollary, we obtain a characterization of  the graphs in this family whose edge rings are almost Gorenstein.
\end{abstract}

\maketitle

\section{Introduction}\label{sec:intro}
A major theme in commutative algebra is the study of invariants of ideals belonging to specific families, such as those arising from combinatorics. Indeed, much work has been performed towards understanding invariants of graph-theoretic ideals such as edge ideals, toric ideals, and binomial edge ideals by exploiting the combinatorics of the underlying graph. (For examples, see \cites{LZ,BOVT, FS, HaH, EH, HH, HKKMV, HHKO, KK, KKS, Kumar, VV, MR3959482, HKO, Kanno, BVT}.) The $h$-polynomial, in particular, has been the subject of much attention in recent studies; we  briefly recall its definition. Given a polynomial ring $S=\mathbb{K} [x_1,\dots,x_m]$ with the standard grading over a field $\mathbb{K}$, and a homogeneous ideal $I$ of $S$, the \textit{Hilbert series} of $S/I$ is the formal power series $\textup{HS}(S/I;t) = \sum_{i\geq0} [\dim_\mathbb{K}(S/I)_i]t^i$
where $\dim_\mathbb{K}(S/I)_i$ is the dimension of the $i$-th graded piece of $S/I$. By the Hilbert--Serre Theorem~\cite{Villarreal-book}*{Theorem 5.1.4}, there exists a unique polynomial $h(S/I;t) \in \mathbb{Z}[t]$, called the {\it $h$-polynomial} of $S/I$, such that $\textup{HS}(S/I;t)$ can be written as \[
    \textup{HS}(S/I;t)=\frac{h(S/I;t)}{(1-t)^{\dim(S/I)}}
    \] with $h_{S/I}(1) \neq 0$. If we write $h(S/I;t)=h_0+h_1t+\cdots +h_st^s$ with $h_s \neq 0$, we call the sequence $(h_0,h_1,\dots,h_s)$ of coefficients of $h(S/I;t)$ the \textit{$h$-vector} of $S/I$.
    
Many authors have studied properties of  $h$-polynomials of ideals defined from graphs \cites{FKVT, HKMT, HKMVT, HM3, HMVT}. However, for toric ideals $I_G$, there are few families of graphs for which the $h$-polynomial of $S/I_G$ has been explicitly described. As far as we know, the $h$-polynomials (or their counterparts) of the edge rings of the following families of graphs have been computed: complete bipartite graphs, \cite{Villarreal-book}*{Proposition 10.6.3}, complete graphs \cite{Villarreal-book}*{Proposition 10.6.10}, complete multipartite graphs \cite{OH00}*{Theorem 2.6}, 
 bipartite graphs (via interior polynomials) \cite{KP}, and a family of graphs composed of a complete bipartite graph and a cycle graph \cite{Galetto}.
Note that if the edge ring of a graph is  normal, then the Hilbert function (resp. $h$-vector) of the edge ring
agrees with the Ehrhart polynomial (resp. $h^*$-vector) of the edge polytope arising from the graph; this fact is used to obtain several of the above results.

In the previous paper \cite{HN}, the second and third named authors determined the $h$-polynomials of a certain family of non-bipartite graphs $\mathcal{G}_n$, consisting of triangles that share a single common vertex (see Figure~\ref{fig1}). For this family, the authors were then able to show that the edge rings $\mathbb{K}[\mathcal{G}_n]$ are almost Gorenstein. As a step towards generalizing this result, the first goal of this paper is to compute the $h$-polynomials of a related family of graphs, containing those studied in \cite{HN}.

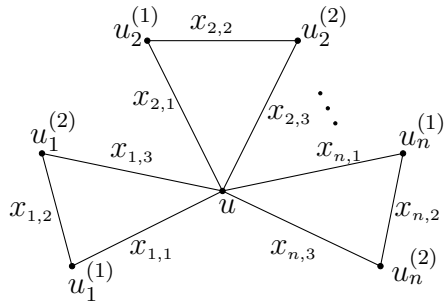
\begin{figure}[h]
\centering
\begin{tikzpicture}
\draw[black, thin] (3,1) -- (5,2) -- (2.6,2.5)-- cycle;
\draw[black, thin] (5,2) -- (4,4) -- (6,4)-- cycle;
\draw[black, thin] (5,2) -- (7.4,2.5) -- (7.1,1)-- cycle;
\filldraw [black] (3,1) circle (1pt);
\filldraw [black] (5,2) circle (1pt);
\filldraw [black] (2.6,2.5) circle (1pt);
\filldraw [black] (4,4) circle (1pt);
\filldraw [black] (6,4) circle (1pt);
\filldraw [black] (7.4,2.5) circle (1pt);
\filldraw [black] (7.1,1) circle (1pt);
\filldraw [black] (6.5,2.9) circle (0.6pt);
\filldraw [black] (6.3,3.3) circle (0.6pt);
\filldraw [black] (6.4,3.1) circle (0.6pt);
\filldraw [black] (4.8,1.8) node[anchor=west] {$u$};
\filldraw [black] (2.8,0.8) node[anchor=west] {$u^{(1)}_{1}$};
\filldraw [black] (3.4,4.2) node[anchor=west] {$u^{(1)}_{2}$};
\filldraw [black] (3.8,2.2) node[anchor=south] {$x_{{\scriptscriptstyle 1,3}}$};
\filldraw [black] (2.9,1.7) node[anchor=east] {$x_{{\scriptscriptstyle 1,2}}$};
\filldraw [black] (4.5,1.2) node[anchor=east] {$x_{{\scriptscriptstyle 1,1}}$};
\filldraw [black] (2.3,2.8) node[anchor=west] {$u^{(2)}_{1}$};
\filldraw [black] (5.9,4.2) node[anchor=west] {$u^{(2)}_{2}$};
\filldraw [black] (4.1,2.9) node[anchor=south] {$x_{{\scriptscriptstyle 2,1}}$};
\filldraw [black] (5.3,4.2) node[anchor=east] {$x_{{\scriptscriptstyle 2,2}}$};
\filldraw [black] (6.33,3) node[anchor=east] {$x_{{\scriptscriptstyle 2,3}}$};
\filldraw [black] (7.2,2.8) node[anchor=west] {$u^{(1)}_{n}$};
\filldraw [black] (7.1,1) node[anchor=west] {$u^{(2)}_{n}$};
\filldraw [black] (6.1,2.5) node[anchor=west] {$x_{{\scriptscriptstyle n,1}}$};
\filldraw [black] (7.6,2) node[anchor=north] {$x_{{\scriptscriptstyle n,2}}$};
\filldraw [black] (6.4,1.2) node[anchor=east] {$x_{{\scriptscriptstyle n,3}}$};
\end{tikzpicture}
\caption{The graph $\mathcal{G}_n$}
\label{fig1}
\end{figure}

Let $n$ be a positive integer. We consider a non-bipartite graph $\mathfrak{g}_{r_{1},\dots,r_{m}}$, consisting of $n$ odd cycles that share a single common vertex (see Figure~\ref{fig: Main graph}).
To be precise, for an integer $m\geq 1$ and  each $j\in [m]=\{1,\dots,m\}$, we define $\mathfrak{g}_{r_1,\dots,r_m}$ to be the graph consisting of $r_j$ cycles of length $2j+1$, such that all $n=\sum\limits_{j=1}^{m}r_{j}$ odd cycles share a single common vertex.

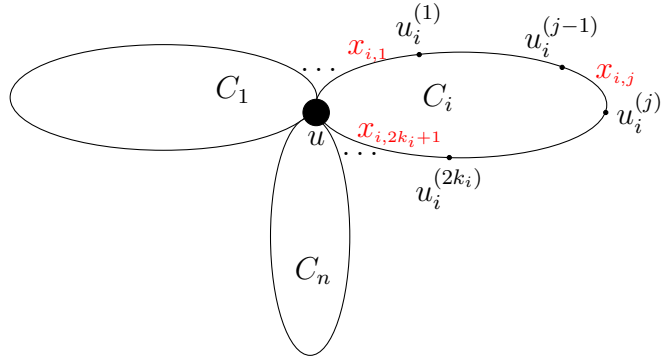
\begin{figure}[ht]
\centering
\begin{tikzpicture}
\filldraw[color=black!100, fill=white!5, thin](2.2,3.1) ellipse (58pt and 20pt);
\draw[color=black!100, fill=white!5,  thin](6.16,3) ellipse (55pt and 20pt);
\filldraw[color=black!100, fill=white!5, thin](4.15,1.25) ellipse (15pt and 45pt);
\filldraw [black] (4.23,2.9) circle (5pt);
\filldraw [black] (4.23,2.8)  node[anchor=north] {$u$};
\path (4.3,2.45) -- node[auto=false]{$\dots$} (4.3,4.5);
\path (4.9,1.8) -- node[auto=false]{$\dots$} (4.8,2.9);
\filldraw [black] (5.6,3.67) circle (0.8pt) node[anchor=south] {$u_{i}^{(1)}$};
\filldraw [black] (7.5,3.5) circle (0.8pt) node[anchor=south] {$u_{i}^{(j-1)}$};
\filldraw [black] (8.08,2.9) circle (0.8pt) node[anchor=west] {$u_{i}^{(j)}$};
\filldraw [black] (6,2.3) circle (0.8pt) node[anchor=north] {$u_{i}^{(2k_{i})}$};
\filldraw [black] (4.5,3.72) node[anchor=west] {\textcolor{red}{$x_{{\scriptscriptstyle i,1}}$}};
\filldraw [black] (7.8,3.4) node[anchor=west] {\textcolor{red}{$x_{{\scriptscriptstyle i,j}}$}};
\filldraw [black] (5.34,2.9) node[anchor=north] {\textcolor{red}{$x_{{\scriptscriptstyle i,2k_{i}+1}}$}};
\filldraw [black] (5.5,3.1) node[anchor=west] {$C_{i}$};
\filldraw [black] (3.8,0.8) node[anchor=west] {$C_{n}$};
\filldraw [black] (3.5,3.2) node[anchor=east] {$C_{1}$};
\end{tikzpicture}
\caption{The graph $\mathfrak{g}_{r_{1},\dots ,r_{m}}$ with $n=\sum\limits_{j=1}^{m}r_{j}$ }
\label{fig: Main graph}
\end{figure}

The main result of our paper is as follows:
\begin{theorem}\label{thm:main}

The $h$-polynomial of $\mathbb{K}[\mathfrak{g}_{r_1,\dots,r_m}]$ is given by
\begin{align}\label{eq:hvector}
    h(\mathbb{K}[\mathfrak{g}_{r_1,\dots,r_m}];t) = \prod_{j=1}^m(1+\dots +t^j)^{r_j}
   -t\prod_{j=1}^m(1+\dots + t^{j-1})^{r_j}. 
\end{align}
\end{theorem}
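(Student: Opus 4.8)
The plan is to compute the Hilbert series of $\mathbb{K}[\mathfrak{g}_{r_1,\dots,r_m}]=S/I_G$ by passing to a squarefree initial ideal and analyzing the resulting Stanley--Reisner ring. Write the edges of a cycle of length $2k_i+1$ as $x_{i,1},\dots,x_{i,2k_i+1}$, with $x_{i,1}$ and $x_{i,2k_i+1}$ incident to the common vertex $u$, and set $A_i=\prod_{\ell\textup{ odd}}x_{i,\ell}$ and $B_i=\prod_{\ell\textup{ even}}x_{i,\ell}$. Evaluating the toric parametrization shows that $A_i$ and $B_i$ map to $u^2\prod_\ell u_i^{(\ell)}$ and $\prod_\ell u_i^{(\ell)}$ respectively, so each binomial $g_{i,i'}=A_iB_{i'}-A_{i'}B_i$ lies in $I_G$. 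First I would argue that these binomials generate $I_G$: since the cycles meet only at $u$, every even closed walk decomposes at $u$ into excursions, the backtracking ones contribute trivially, and the essential content is the pairing of two full odd-cycle traversals (equivalently, $I_G$ is the ideal of $2\times 2$ minors of the $2\times n$ monomial matrix with columns $(A_i,B_i)$). This is where I would follow the even-closed-walk analysis used in the $m=1$ case of \cite{HN}.

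Next I would fix a term order refining a weight that assigns every edge of cycle $i$ a common weight $w_i$ with $w_1>\dots>w_n$; since $A_i$ has one more factor than $B_i$, one gets $\operatorname{in}(g_{i,i'})=A_iB_{i'}$ exactly for $i<i'$. Because distinct cycles use disjoint variables, and $A_i,B_i$ are supported on the odd- and even-indexed edges of the \emph{same} cycle, two leading terms $A_iB_{i'}$ and $A_pB_q$ are coprime unless they share a first index ($i=p$) or a last index ($i'=q$); Buchberger's first criterion then disposes of all $S$-pairs except these, and the remaining $S$-pairs reduce to zero against a third generator by a short direct computation. Hence $\{g_{i,i'}:i<i'\}$ is a Gröbner basis and $\operatorname{in}(I_G)=\langle A_iB_{i'}:i<i'\rangle$ is a squarefree monomial ideal, whose standard monomials are precisely the $m=\prod_i m_i$ for which no $i<i'$ has $m_i$ being $A$-full and $m_{i'}$ being $B$-full.

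The main work is the Hilbert series of $S/\operatorname{in}(I_G)$, which equals that of $\mathbb{K}[\mathfrak{g}_{r_1,\dots,r_m}]$. For each cycle I would record four generating functions tracking the degree of $m_i$ according to whether it is $A$-full, $B$-full, both, or neither; these are elementary since the odd- and even-indexed edge blocks are independent. The standardness condition says exactly that every $A$-full cycle lies to the right of every $B$-full cycle, which I would encode by a left-to-right transfer matrix with binary state ``an $A$-full cycle has already occurred.'' Each per-cycle matrix is upper triangular, so their product is upper triangular and the series is the sum of its first row; using the simplifications $L_i:=g_i^\emptyset+g_i^B=(1-t^{k_i+1})/(1-t)^{2k_i+1}$, $R_i:=g_i^\emptyset+g_i^A=(1-t^{k_i})/(1-t)^{2k_i+1}$, and $g_i^A+g_i^{AB}=t^{k_i+1}/(1-t)^{2k_i+1}$, this yields
\[
\textup{HS}=\frac{1}{(1-t)^{N}}\Bigl[\,\prod_i(1-t^{k_i+1})+\sum_{j=1}^n t^{k_j+1}\prod_{i<j}(1-t^{k_i+1})\prod_{i>j}(1-t^{k_i})\,\Bigr],
\]
where $N=\sum_i(2k_i+1)$ is the number of edges.

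Finally I would collapse the bracket using the telescoping identity $\prod_i P_i-\prod_i Q_i=\sum_j(\prod_{i<j}P_i)(P_j-Q_j)(\prod_{i>j}Q_i)$ with $P_i=1-t^{k_i+1}$, $Q_i=1-t^{k_i}$ and $P_j-Q_j=t^{k_j}(1-t)$; this rewrites the bracket as $(\prod_iP_i-t\prod_iQ_i)/(1-t)$, so $\textup{HS}=(\prod_iP_i-t\prod_iQ_i)/(1-t)^{N+1}$. Since $\dim\mathbb{K}[\mathfrak{g}_{r_1,\dots,r_m}]=|V|=1+2\sum_i k_i=N-n+1$, dividing out the power $(1-t)^{N-n+1}$ leaves the $h$-polynomial $\prod_i\frac{1-t^{k_i+1}}{1-t}-t\prod_i\frac{1-t^{k_i}}{1-t}$, which is exactly \eqref{eq:hvector} after grouping the $r_j$ cycles of each length. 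I expect the two genuine obstacles to be the bookkeeping steps---verifying that the $g_{i,i'}$ generate $I_G$ (controlling arbitrary even closed walks through $u$) and organizing the transfer-matrix sum---while the closing telescoping identity is purely formal.
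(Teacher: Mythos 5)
Your proposal is correct, and every step I checked goes through: the binomials $g_{i,i'}=A_iB_{i'}-A_{i'}B_i$ are indeed the defining generators (they are the $2\times 2$ minors you describe, and the decomposition-at-$u$ argument recovers what the paper gets by citing \cite{HHO}*{Lemma 5.11}); your weight order picks out $\operatorname{in}(g_{i,i'})=A_iB_{i'}$ for $i<i'$; the only non-coprime $S$-pairs are those sharing a first or last index, and each reduces to zero against a third generator (e.g.\ $B_lg_{i,j}-B_jg_{i,l}=-B_ig_{j,l}$ and $A_jg_{i,l}-A_ig_{j,l}=A_lg_{i,j}$); the ``first $A$-full cycle'' decomposition of standard monomials gives exactly your bracketed sum; and the telescoping identity with $P_j-Q_j=t^{k_j}(1-t)$ collapses it to $\bigl(\prod_iP_i-t\prod_iQ_i\bigr)/(1-t)$, which together with $\dim\mathbb{K}[\mathfrak{g}_{r_1,\dots,r_m}]=|V|$ yields \eqref{eq:hvector}. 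However, your route is genuinely different from the paper's. The paper also starts from the same squarefree initial ideal (obtained for free, since the primitive even closed walks form a universal Gr\"obner basis), but then works with the Stanley--Reisner complex $\Delta_{\mathfrak{g}_n}$ of $\operatorname{in}_\prec(I_{\mathfrak{g}_n})$: it describes the facets explicitly, shows that extending one cycle by two edges decomposes the complex as $\Delta_{\mathfrak{g}_{n}} = \Delta_{\mathfrak{g}_{n}^{\prime}}*\{x\}\cup \Delta^{\prime\prime}*\mathcal{O}_{1}^{\prime}$ with intersection $\Delta_{\mathfrak{g}_{n}^{\prime}}$, deduces by inclusion--exclusion the recursion $h(\mathbb{K}[\mathfrak{g}_{n}];t)= t\, h(\mathbb{K}[\mathfrak{g}_{n}^{\prime}];t) + h(\mathbb{K}[\mathfrak{g}_{n}^{\prime\prime}];t)$, and closes by induction on the lattice points $(n,N)$, using the all-triangles case of \cite{HN} and the one-cycle case as base cases. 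Your argument buys self-containedness (no appeal to \cite{HN}*{Theorem 1.1} or to induction) and produces the full Hilbert series in closed form in one pass; the paper's argument buys structural information about $\Delta_{\mathfrak{g}_n}$ (the facet description and star decomposition, which also underlie its shellability remark) and a shorter verification given the prior work. Two small points of hygiene: your $N$ is the number of edges, whereas the paper's $N=\sum_j jr_j$, so rename one of them; and you should record explicitly that $h(1)=\prod_i(k_i+1)-\prod_i k_i>0$, which confirms that the power of $1-t$ you divide out is exactly the Krull dimension.
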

Using Theorem \ref{thm:main}, we are then able to characterize the graphs $\mathfrak{g}_{r_1,\dots ,r_m}$ such that $\mathbb{K}[\mathfrak{g}_{r_1,\dots ,r_m}]$ is almost Gorenstein.
\begin{theorem}\label{thm:almostGoren}
For the graph $\mathfrak{g}_{r_1,\dots ,r_m}$ with $n=\sum_{j=1}^{m}r_{j}$ and $N=\sum_{j=1}^{m} jr_{j}$, the edge ring $\mathbb{K}[\mathfrak{g}_{r_1, \dots, r_m}]$ is almost Gorenstein if and only if either
\begin{itemize}
    \item $n=1,2$; or 
    \item $n\geq3$ and every cycle in $\mathfrak{g}_{r_1, \dots, r_m}$ is a $3$-cycle, that is, $N=n$.
\end{itemize}
\end{theorem}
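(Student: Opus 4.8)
The plan is to read Theorem~\ref{thm:almostGoren} off of Theorem~\ref{thm:main} together with the graded almost Gorenstein criterion of Goto--Takahashi--Taniguchi. Since $\mathfrak{g}_{r_1,\dots,r_m}$ satisfies the odd-cycle condition, the edge ring $R:=\mathbb{K}[\mathfrak{g}_{r_1,\dots,r_m}]$ is normal and hence Cohen--Macaulay, so $R$ is a Cohen--Macaulay graded domain with a canonical module $\omega_R$ and $a$-invariant $a=a(R)$. I will use that $R$ is almost Gorenstein precisely when there is a graded short exact sequence $0\to R\to\omega_R(-a)\to C\to 0$ with $C=0$ (the Gorenstein case) or $C$ an Ulrich module, and that $R$ is Gorenstein if and only if its $h$-vector is symmetric.

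Writing $P(t)=\prod_{j=1}^m(1+\dots+t^j)^{r_j}$ and $Q(t)=\prod_{j=1}^m(1+\dots+t^{j-1})^{r_j}$, Theorem~\ref{thm:main} reads $h(R;t)=P(t)-tQ(t)$. For $n=1$ this collapses to $h(R;t)=1$ (the edge ring of a single odd cycle is a polynomial ring), so $R$ is Gorenstein. For $n\ge 2$ the $h$-polynomial has degree $s=N$; since each factor $1+\dots+t^k$ is palindromic, $P$ and $Q$ are palindromic of degrees $N$ and $N-n$, and comparing $h(R;t)$ with its reversal $t^{N}h(R;1/t)$ yields the clean symmetry defect $t^{N}h(R;1/t)-h(R;t)=(t-t^{n-1})Q(t)$. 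Thus for $n=2$ the defect vanishes, the $h$-vector is symmetric, and $R$ is Gorenstein; while for $n\ge 3$ the defect is nonzero, $R$ is not Gorenstein, and I must decide when the cokernel $C$ can be Ulrich.

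When $n\ge 3$ and every cycle is a triangle --- that is, $N=n$, equivalently $Q\equiv 1$ --- the graph is precisely the family $\mathcal{G}_n$ of Figure~\ref{fig1}, so $R$ is almost Gorenstein by \cite{HN}; alternatively, the defect $t-t^{n-1}$ yields a candidate $C$ with $h$-polynomial $t+\dots+t^{n-2}$ and $e(C)=n-2$, for which one checks $\mu(C)=e(C)$. The real content is the converse: if some cycle has length at least $5$ (so $N>n$ and $Q\not\equiv 1$), then $R$ is not almost Gorenstein. As a first handle I would extract a numerical necessary condition from the Ulrich sequence. Since $h_s=1$, the module $\omega_R(-a)$ is one-dimensional in its bottom degree $0$, and the additivity $e(C)\le\typ(R)\le e(C)+1$ forced by $0\to R\to\omega_R(-a)\to C\to 0$ (using $\mu(B)\le\mu(A)+\mu(C)$ for a short exact sequence, and $\mu(C)=e(C)$) would require $\typ(R)\in\{(n-2)\prod_j j^{r_j},\,(n-2)\prod_j j^{r_j}+1\}$, where $e(C)=(n-2)\prod_j j^{r_j}$ is read off from the defect.

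The hard part will be this converse, and it genuinely requires information beyond Theorem~\ref{thm:main}: the defect $(t-t^{n-1})Q(t)$, and the invariant $e(C)$ it determines, do not separate the almost Gorenstein members of the family from the rest, since distinct graphs $\mathfrak{g}_{r_1,\dots,r_m}$ can share both while differing in almost Gorensteinness. I would therefore compute the canonical module $\omega_R$ from the finer structure of $R$, either via normality (identifying $\omega_R$ with the ideal of interior lattice points of the edge polytope of $\mathfrak{g}_{r_1,\dots,r_m}$) or via the squarefree initial ideal underlying the proof of Theorem~\ref{thm:main}, determine its minimal generators (hence $\typ(R)$) and the multiplication they induce on $C$, and show that the Ulrich condition fails --- equivalently $\mathfrak{m}C\ne 0$, or $\typ(R)$ lies outside the admissible range above --- exactly when a cycle of length at least $5$ is present. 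Pinning down $\omega_R$ combinatorially, and with it the Ulrich condition, is the crux of the proof.
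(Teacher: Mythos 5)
Your computations are sound as far as they go: the symmetry defect $t^{N}h(R;1/t)-h(R;t)=(t-t^{n-1})Q(t)$ agrees exactly with the paper's $t(1-t^{n-2})\prod_{j}(1+\dots+t^{j-1})^{r_j}$, and your treatment of $n=1,2$ and of the all-triangles case (via \cite{HN}) is fine. But the converse --- $n\geq 3$ with some cycle of length $\geq 5$ implies \emph{not} almost Gorenstein --- is precisely the content of the theorem, and your proposal does not prove it; it only outlines a program (compute $\omega_R$ from interior lattice points or from the initial ideal, then test the Ulrich condition) and correctly flags it as the crux. Moreover, the numerical necessary condition you do extract, $e(C)\leq\typ(R)\leq e(C)+1$ with $e(C)=(n-2)\prod_j j^{r_j}$, is genuinely too weak to finish even if you knew the type: for $n=3$ with two triangles and one pentagon one has $\typ(R)=2$ and $e(C)=2$, so your inequality is satisfied, yet the ring is not almost Gorenstein. (A side remark: your claim that distinct graphs in this family can share the defect is false --- $Q(t)$ factors uniquely into cyclotomics, so the defect determines all the $r_j$ --- but this is not where the difficulty lies.)

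The paper closes the gap with two imported facts that replace your inequality by an equality. First, Higashitani's criterion (\cite{H}*{Corollary 2.7}, Proposition~\ref{prop:alm} in the paper): a Cohen--Macaulay homogeneous domain is almost Gorenstein if and only if $\typ(R)-1=\tilde{e}(R)$, where $\tilde{e}(R)$ is computed from the $h$-vector alone and equals the defect series evaluated at $t=1$ after dividing by $1-t$; from your defect this gives $\tilde{e}(R)=(n-2)\prod_{j=1}^{m}j^{r_j}$. Second, the Cohen--Macaulay type formula $\typ(\mathbb{K}[\mathfrak{g}_{r_1,\dots,r_m}])=n-1$ (\cite{compact}*{Theorem 5.7}, Proposition~\ref{prop:CMtype}). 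Combining, almost Gorensteinness for $n\geq 3$ is equivalent to $n-2=(n-2)\prod_j j^{r_j}$, i.e., $\prod_j j^{r_j}=1$, i.e., $N=n$. So the missing ingredients in your argument are exactly these two citations; with them, the defect computation you already have finishes the proof in one line, and no explicit description of $\omega_R$ or of the Ulrich cokernel is needed.
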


Our paper is organized as follows. In Section \ref{sec:prelims}, we recall the necessary background on graphs, edge rings, toric ideals, and simplicial complexes, and introduce the family of graphs $\mathfrak{g}_{r_{1},\dots ,r_{m}}$ we are concerned with. In Section \ref{sec:h-poly}, we prove our main result, Theorem \ref{thm:main}. Finally, in Section  \ref{sec: Gorenstein}, we describe when the edge ring of $\mathfrak{g}_{r_{1},\dots ,r_{m}}$ is almost Gorenstein.
\section{Background}\label{sec:prelims}

We now recall the necessary definitions and introduce some more notation to describe our results.
\subsection{Graph theory, toric ideals, edge rings, and simplicial complexes}

A {\it finite simple graph} (or a {\it graph}) $G=(V(G),E(G))$ consists 
of a non-empty finite set 
$V(G) = \{v_1,\ldots,v_n\}$, called the
vertices, and a finite set 
$E(G) = \{e_1,\ldots,e_q\} \subseteq \{\{u,v\}\mid 
u,v \in V(G), u\neq v\}$ of distinct unordered pairs 
of distinct elements of $V(G)$, called 
the edges.  We sometimes write $V$ (resp. $E$) for $V(G)$ (resp. $E(G)$) if $G$ is clear from the context.
A graph $H = (V(H),E(H))$ is said to be a {\it subgraph} of a graph $G$ if $V(H) \subseteq V(G)$ and  $E(H) \subseteq E(G)$. In this case, we say that $G$ contains $H$ and write $H \subseteq G$.

A {\it walk} of $G$ is a sequence of edges $(e_1, e_2, \dots, e_m)$, where each $e_i=\{v_i,w_i\}\in E$ and $w_i=v_{i+1}$ for each $i = 1, \dots, m - 1$. 
Equivalently, a walk is a sequence of vertices $(u_1, \dots,u_m,u_{m+1})$ such that 
$\{u_i,u_{i+1}\} \in E$ for all 
$i = 1,\dots,m$. Here, $m$ is referred to as the 
{\it length} of the walk. A walk is 
{\it even} if $m$ is even, and it is {\it closed} if $u_{m+1}=u_1$. Two vertices $u$ and $v$ are said to be {\it connected} if there is a walk between them. A graph $G$ is said to be {\it connected} if every two distinct vertices of $G$ are connected.

A {\it cycle} of a graph $G$ is a closed walk $(u_1, \dots,u_m,u_{m+1}=u_1)$ of vertices of $G$ (with $m \geq 3$) such that the only vertices in the walk that are not pairwise distinct are $u_1$ and $u_{m+1}$. A cycle of length $m$ is called an 
{\it $m$-cycle}.   

Let $G = (V,E)$ be a finite simple graph with edge set $E = \{e_1,\ldots,e_q\}$ ($q \geq 1$) and vertex set $V =\{v_1,\ldots,v_n\}$. Let $\mathbb{K}$ be a field. Define a $\mathbb{K}$-algebra homomorphism $\varphi:R = \mathbb{K}[e_1,\ldots,e_q] \rightarrow \mathbb{K}[v_1,\ldots,v_n]$ by $\varphi(e_i) = v_jv_k$ if $e_i = \{v_j,v_k\} \in E$.  The {\it toric ideal of $G$}, denoted $I_G$, is the kernel of $\varphi$. The \textit{edge ring} of $G$, denoted $\mathbb{K}[G]$, is the image of $\varphi$, and we have $\mathbb{K}[G]\cong R/I_G$ as rings.

Let $\Gamma=(e_{i_1},\ldots,e_{i_{2r}})$ be a closed even walk of a graph $G$. We define a binomial $f_\Gamma=f_\Gamma^{(+)}-f_\Gamma^{(-)}$, where 
\[f_\Gamma^{(+)}=\prod_{k=1}^rx_{i_{2k-1}} \;\text{ and }\;f_\Gamma^{(-)}=\prod_{k=1}^rx_{i_{2k}}.\]
We say that $\Gamma$ is a \textit{primitive} closed even walk if there is no even closed walk $\Gamma'$ of $G$ with $f_{\Gamma'} \neq f_\Gamma$ 
such that $f_{\Gamma'}^{(+)}$ divides $f_\Gamma^{(+)}$ and $f_{\Gamma'}^{(-)}$ divides $f_\Gamma^{(-)}$. The following result shows how we may obtain a generating set for $I_G$.
\begin{theorem}[{\cite{HHO}*{Lemma 5.10}}]\label{Thm:GB}
    The toric ideal $I_G$ of a graph $G$, then 
    \[I_G = \langle f_\Gamma ~|~ \mbox{$\Gamma$ is a primitive closed even walk of $G$}\rangle.\]
\end{theorem}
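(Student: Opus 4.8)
The plan is to prove the two inclusions separately, with essentially all of the work going into the containment $\supseteq$. For the inclusion $\langle f_\Gamma\rangle \subseteq I_G$ I would first check the single-generator statement $f_\Gamma \in I_G$. Writing the closed even walk as a vertex sequence $\Gamma=(u_1,u_2,\dots,u_{2r},u_{2r+1}=u_1)$, the edge in position $k$ is $\{u_k,u_{k+1}\}$, so $\varphi$ sends $f_\Gamma^{(+)}=\prod_{k=1}^r x_{i_{2k-1}}$ to $(u_1u_2)(u_3u_4)\cdots = u_1u_2\cdots u_{2r}$ and sends $f_\Gamma^{(-)}=\prod_{k=1}^r x_{i_{2k}}$ to $(u_2u_3)(u_4u_5)\cdots(u_{2r}u_1)=u_1u_2\cdots u_{2r}$; the two images agree, so $f_\Gamma\in\Ker\varphi=I_G$.

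For the reverse inclusion I would use the standard structural fact that a toric ideal, being the kernel of a monomial $\mathbb{K}$-algebra homomorphism, is spanned as a $\mathbb{K}$-vector space by the pure binomials $x^{\mathbf a}-x^{\mathbf b}$ with $\varphi(x^{\mathbf a})=\varphi(x^{\mathbf b})$; since $I_G$ is prime, common monomial factors may be cancelled, so it suffices to generate every such binomial with $\gcd(x^{\mathbf a},x^{\mathbf b})=1$. The engine of both remaining steps is the elementary identity: if $x^{\mathbf a}-x^{\mathbf b},\, x^{\mathbf p}-x^{\mathbf q}\in I_G$ with $x^{\mathbf p}\mid x^{\mathbf a}$, $x^{\mathbf q}\mid x^{\mathbf b}$, and we write $x^{\mathbf a}=x^{\mathbf p}x^{\mathbf r}$, $x^{\mathbf b}=x^{\mathbf q}x^{\mathbf s}$, then
\[
x^{\mathbf a}-x^{\mathbf b} = x^{\mathbf r}\bigl(x^{\mathbf p}-x^{\mathbf q}\bigr) + x^{\mathbf q}\bigl(x^{\mathbf r}-x^{\mathbf s}\bigr),
\]
where $x^{\mathbf r}-x^{\mathbf s}\in I_G$ (its two terms have equal image under $\varphi$, obtained by dividing the equal images of $x^{\mathbf a},x^{\mathbf b}$ by those of $x^{\mathbf p},x^{\mathbf q}$) and has strictly smaller degree.

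The combinatorial heart is to produce, from a coprime $x^{\mathbf a}-x^{\mathbf b}\in I_G$, a closed even walk $\Gamma$ with $f_\Gamma^{(+)}\mid x^{\mathbf a}$ and $f_\Gamma^{(-)}\mid x^{\mathbf b}$. Here I would read $\mathbf a$ and $\mathbf b$ as edge-multisets $A$ and $B$; the equality $\varphi(x^{\mathbf a})=\varphi(x^{\mathbf b})$ says exactly that every vertex has the same degree in $A$ as in $B$, and coprimality says $A,B$ share no edge. Starting from any edge of $A$ and alternating $A$-edge, $B$-edge, $A$-edge, $\dots$, the matching degree condition guarantees that at each newly reached vertex an edge of the required type is still available to continue; as $G$ is finite the walk must revisit a vertex and close into an alternating (hence even) closed walk $\Gamma$ whose odd-position edges lie in $A$ and even-position edges in $B$. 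Feeding $x^{\mathbf p}=f_\Gamma^{(+)}$, $x^{\mathbf q}=f_\Gamma^{(-)}$ into the identity and inducting on $\deg(x^{\mathbf a})$ then yields $I_G=\langle f_\Gamma : \Gamma \text{ a closed even walk}\rangle$. Finally I would refine ``closed even walk'' to ``primitive closed even walk'' by a second induction, now on the length of $\Gamma$: if $f_\Gamma$ is not primitive, the definition supplies a closed even walk $\Gamma'$ with $f_{\Gamma'}\neq f_\Gamma$ and $f_{\Gamma'}^{(\pm)}\mid f_\Gamma^{(\pm)}$, necessarily of strictly smaller degree; applying the identity with $\Gamma'$ expresses $f_\Gamma$ through $f_{\Gamma'}$ and a smaller-degree element of $I_G$, both handled by induction together with the previous step.

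I expect the main obstacle to be the alternating-walk extraction and the bookkeeping that makes the two inductions well-founded: one must argue carefully that the alternation can always be continued (this is where the equal-degree condition is used, and it is the crux), that the resulting walk is a legitimate closed even walk of $G$ in the sense defined above rather than a degenerate object, and that $f_{\Gamma'}\neq f_\Gamma$ genuinely forces a strict drop in degree so that the primitivity refinement terminates.
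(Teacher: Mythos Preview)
The paper does not give its own proof of this statement: Theorem~\ref{Thm:GB} is quoted from \cite{HHO}*{Lemma 5.10} and used as a black box (with the Gr\"obner basis strengthening cited separately from \cite{Villarreal-book}). So there is no in-paper argument to compare your proposal against.

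That said, your sketch is the standard route (essentially the argument one finds in the cited sources) and is sound. Two small points worth tightening. First, in the alternating-walk extraction, ``the walk must revisit a vertex and close'' is a little loose: revisiting a vertex does not by itself force the extracted closed subwalk to have even length. The clean way to finish is to continue the alternating walk until it gets stuck; the equal-degree condition shows this can only happen at the starting vertex and only after a $B$-edge, so the whole walk is an even alternating closed walk with odd-position edges in $A$ and even-position edges in $B$. Second, your claim that $f_{\Gamma'}\neq f_\Gamma$ forces strictly smaller degree is correct and easy: divisibility between monomials of the same degree is equality, and homogeneity of $f_{\Gamma'}$ forces $\deg f_{\Gamma'}^{(+)}=\deg f_{\Gamma'}^{(-)}$, so at least one (hence both) of the divisibilities is strict. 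With those two clarifications your induction is well-founded and the proof goes through.
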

Moreover, this generating set is a Gr\"obner basis for $I_G$ with respect to any monomial order (e.g., see \cite{Villarreal-book}*{Proposition 10.1.10}).

We say that a connected graph $G$ satisfies the \textit{odd-cycle condition} if for any two odd cycles $A$ and $B$ of $G$ with $V(A) \cap V(B) = \emptyset$, there exists $i \in V(A)$ and $j\in  V(B)$ such that $e = \{i,j\} \in E(G)$. We then have the following combinatorial criterion for the edge ring $\mathbb{K}[G]$ to be \textit{normal}. (For an introduction to normality, see, e.g., \cite{BH}*{Section 6.1}.)

\begin{theorem}[{\cites{OH98, SVV}}]
Let $G$ be a connected graph. The edge ring $\mathbb{K}[G]$ is normal if and only if $G$ satisfies the odd-cycle condition. 
\end{theorem}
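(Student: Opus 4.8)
The plan is to pass to the affine-semigroup description of the edge ring and show that normality is equivalent to the semigroup being saturated. Identify $\mathbb{K}[G]$ with the semigroup ring of $\mathbb{N}A$, where $A=\{a_e : e\in E(G)\}\subseteq\mathbb{Z}^n$ and $a_e=\mathbf{e}_i+\mathbf{e}_j$ for $e=\{i,j\}$. By the standard criterion for normality of affine semigroup rings (see \cite{BH}*{Section 6.1}), $\mathbb{K}[G]$ is normal if and only if $\mathbb{N}A=\mathbb{Z}A\cap\mathbb{R}_{\geq0}A$. I would first record two dictionary facts: (i) a lattice description, namely that for a connected non-bipartite $G$ one has $\mathbb{Z}A=\{x\in\mathbb{Z}^n:\sum_i x_i\equiv0\pmod{2}\}$ (the bipartite case being easier and giving normality outright, since the incidence matrix is then totally unimodular); and (ii) that $\beta\in\mathbb{R}_{\geq0}A$ (resp.\ $\beta\in\mathbb{N}A$) precisely when $\beta$ is the degree vector of a nonnegative rational (resp.\ integral) edge-weighting of $G$, i.e.\ when there is $\lambda:E(G)\to\mathbb{R}_{\geq0}$ (resp.\ $\mathbb{N}$) with $\sum_{e\ni v}\lambda_e=\beta_v$ for all $v$. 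Thus the theorem becomes the statement that, under the odd-cycle condition, every fractional degree-constrained edge-weighting whose degree vector lies in $\mathbb{Z}A$ can be replaced by an integral one.

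For necessity I would argue the contrapositive. Suppose the odd-cycle condition fails, so $G$ has two vertex-disjoint odd cycles $C_1,C_2$ with no edge joining them. Set $\alpha=\mathbf{1}_{C_1}+\mathbf{1}_{C_2}$, the sum of the vertex-indicator vectors. Then $\alpha=\tfrac12\sum_{e\in C_1}a_e+\tfrac12\sum_{e\in C_2}a_e\in\mathbb{R}_{\geq0}A$, and $\alpha\in\mathbb{Z}A$ since its coordinate sum $|V(C_1)|+|V(C_2)|$ is even. However $\alpha\notin\mathbb{N}A$: any expression $\alpha=\sum_e c_e a_e$ with $c_e\in\mathbb{N}$ can only use edges contained in $\operatorname{supp}(\alpha)=V(C_1)\cup V(C_2)$, and since no edge joins $C_1$ to $C_2$ the sum splits, forcing $\mathbf{1}_{C_1}$ to be an $\mathbb{N}$-combination of edge vectors; this is impossible because each $a_e$ has coordinate sum $2$ while $|V(C_1)|$ is odd. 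Hence $\mathbb{N}A\subsetneq\mathbb{Z}A\cap\mathbb{R}_{\geq0}A$ and $\mathbb{K}[G]$ is not normal.

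For sufficiency I would take $\beta\in\mathbb{Z}A\cap\mathbb{R}_{\geq0}A$ and produce an integral edge-weighting realizing it. The polytope $P_\beta=\{\lambda\in\mathbb{R}^E_{\geq0}:\sum_{e\ni v}\lambda_e=\beta_v\text{ for all }v\}$ is nonempty (as $\beta\in\mathbb{R}_{\geq0}A$) and bounded, so it has a vertex $\lambda^\ast$. The key structural input is the classical half-integrality of such degree-constrained ($b$-matching) polytopes: $\lambda^\ast$ is half-integral, and the edges on which it takes the value $\tfrac12$ form a disjoint union of odd cycles $C_1,\dots,C_p$. Writing $m_e=\lfloor\lambda^\ast_e\rfloor$ then gives $\beta=\sum_e m_e a_e+\sum_{k=1}^p\mathbf{1}_{C_k}$, where the first summand already lies in $\mathbb{N}A$. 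It remains to show $\sum_k\mathbf{1}_{C_k}\in\mathbb{N}A$. Since this vector lies in $\mathbb{Z}A$, its coordinate sum $\sum_k|V(C_k)|$ is even, so $p$ is even; pairing the (pairwise vertex-disjoint) odd cycles arbitrarily, the odd-cycle condition supplies for each pair $(C,C')$ an edge $\{i,j\}$ with $i\in V(C)$ and $j\in V(C')$. Choosing a near-perfect matching of $C$ missing $i$ and one of $C'$ missing $j$, and adjoining the bridge $\{i,j\}$, yields a perfect matching of the vertex set $V(C)\cup V(C')$ whose edge vectors sum to $\mathbf{1}_C+\mathbf{1}_{C'}$. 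Summing over pairs gives $\sum_k\mathbf{1}_{C_k}\in\mathbb{N}A$, whence $\beta\in\mathbb{N}A$, as desired.

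The main obstacle is the sufficiency direction, and within it the two structural ingredients: the half-integrality of the vertices of $P_\beta$ together with the identification of their fractional support as vertex-disjoint odd cycles (a Balinski-type argument on the incidence matrix of $G$), and the bridged near-perfect-matching construction that converts each pair of connected odd cycles into an honest sum of edge vectors. The lattice bookkeeping — that $\mathbb{Z}A$ is cut out by the single parity condition in the non-bipartite case, which is exactly what forces the number $p$ of leftover odd cycles to be even — is routine but must be handled with care, since it is precisely this parity that makes the pairing, and hence the whole argument, go through.
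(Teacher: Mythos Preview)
The paper does not supply its own proof of this theorem: it is quoted as a known result with citations to \cite{OH98} and \cite{SVV}, and is used only as a black box to conclude that $\mathbb{K}[\mathfrak{g}_n]$ is normal (hence Cohen--Macaulay). There is therefore nothing in the paper to compare your argument against.

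That said, your proposal is a faithful reconstruction of the classical proof. The necessity direction is correct as written; note only that ``the sum splits'' uses that every edge with both endpoints in $V(C_1)\cup V(C_2)$ lies entirely inside one of $V(C_1)$ or $V(C_2)$ (there may be chords, but the parity obstruction $\sum_i(\mathbf{1}_{C_1})_i=|V(C_1)|$ odd still kills any $\mathbb{N}$-representation). For sufficiency, the two structural inputs you flag---half-integrality of the vertices of $P_\beta$ for integral $\beta$ with the fractional support a vertex-disjoint union of odd cycles, and the bridged near-perfect-matching trick---are exactly the ingredients in the original references; the parity bookkeeping forcing $p$ even is handled correctly via $\sum_k\mathbf{1}_{C_k}=\beta-\sum_e m_e a_e\in\mathbb{Z}A$. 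So your outline is sound, and matches the standard argument rather than diverging from it.
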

Note that if $G$ satisfies the odd-cycle condition, it follows from a result of Hochster~\cite{Hochster}*{Theorem 1} that $\mathbb{K}[G]$ is a Cohen--Macaulay ring. We refer the reader to  \cite{HHO}*{Section 5} and \cite{Villarreal-book}*{Section 10} for more details on edge rings and toric ideals of graphs.

A \textit{simplical complex} on a vertex set $X=\{x_1,\dots,x_n\}$ is a collection $\Delta$ of subsets of $X$ such that (i) if $F \in \Delta$ and $G \subseteq F$, then $G \in \Delta$, and (ii) for each $i=1,\dots,n$, $\{x_i\} \in \Delta$. An element $F \in \Delta$ is called a \textit{face}. A maximal element of $\Delta$ with respect to inclusion is called a \textit{facet}. The dimension of a face $F$ in $\Delta$ is defined as $\dim F=|F|-1$. The dimension of $\Delta$ is $\dim \Delta =\max\{\dim F \mid F \in \Delta\}$.

Given a squarefree monomial ideal $I$ in a polynomial ring $S=\mathbb{K}[x_1,\dots,x_n]$, the \textit{Stanley--Reisner complex} of $I$ is the simplical complex $ \Delta_I= \{A \subseteq \{x_1,\dots,x_n\} \mid m_A \not \in I\}$, where \[m_A=\prod_{x_i \in A} x_i.\] We call the quotient ring $S/I$ the \textit{Stanley-Reisner ring} of $\Delta_I$, and denote it by $\mathbb{K}[\Delta_I]$. For a graph $G$ with toric ideal $I_G$, we write  $\Delta_G$ for the Stanley-Reisner complex of $\text{in}_\prec(I_G)$ (the initial ideal of $I_G$ with respect to a monomial order $\prec$). (For an overview of Stanley--Reisner theory, see e.g., \cite{BH}*{Section 5}.)

\subsection{\texorpdfstring{Fundamental notions concerning the graph $\mathfrak{g}_{r_1, \dots, r_m}$}{The graph gn}}

In this subsection, we concentrate on our graph $\mathfrak{g}_{r_1, \dots, r_m}$ and its associated edge ring $\mathbb{K}[\mathfrak{g}_{r_1, \dots, r_m}]$.

Note that from here on, we will write $\mathfrak{g}_{n} \coloneqq \mathfrak{g}_{r_1, \dots , r_m}$ and $ \mathbb{K}[\mathfrak{g}_{n}] \coloneqq \mathbb{K}[\mathfrak{g}_{r_1, \dots , r_m}]$, where $n=\sum_{p=1}^{m}r_{p}$.
For each $1\leq i\leq n$, let $C_i$ be the $i^{\textrm{th}}$ cycle in $\mathfrak{g}_{n}$. The length of $C_i$ is $2 k_{i}+1$, where $k_{i} \in [m]$. 

Since all the odd cycles in $\mathfrak{g}_{n}$ share a common vertex, our graph $\mathfrak{g}_{n}$ satisfies the odd-cycle condition, and therefore, the edge ring $\mathbb{K}[\mathfrak{g}_{n}]$ is a normal Cohen--Macaulay homogeneous domain. 

The vertex set of $\mathfrak{g}_{n}$ is 
$\{u\}\cup \{u_{i}^{(j)}\colon 1\leq i\leq n, 1\leq j\leq 2k_{i}\}$ and
we label the edges of $\mathfrak{g}_{n}$ as $x_{i,j}$ such that  
\begin{itemize}
    \item for $1\leq i\leq n$, $x_{i,1}=\{u,u_{i}^{(1)}\}$ and $x_{i,2k_{i}+1}=\{u,u_{i}^{(2k_{i})}\}$, and   \vspace{0.1cm}
    \item for $1\leq i\leq n$ and $2\leq j\leq 2k_{i}$, $x_{i,j}=\{u_{i}^{(j-1)},u_{i}^{(j)}\}$. (For illustration, see Figure~\ref{fig: Main graph}.)
\end{itemize}

Let $R$ be the polynomial ring $\mathbb{K}[x_{i,j} \colon 1\leq i  \leq n, 1\leq j \leq 2k_{i}+1]$ where $ k_{i}\in [m]$, and we regard $I_{\mathfrak{g}_{n}}$ as an ideal of $R$. 

By \cite{HHO}*{Lemma 5.11}, every primitive even closed walk of the graph $\mathfrak{g}_{n}$ is given by 
\[(x_{i,1},x_{i,2},\dots x_{i,2k_{i}+1},x_{j,1},x_{j,2},\dots x_{j,2k_{j}+1}); \hspace{0.3cm}1\leq i < j \leq n, \text{ and } k_{i},k_{j}\in [m].\]
Therefore, the toric ideal $I_{\mathfrak{g}_{n}}$ is generated by the binomials:
\begin{align}\label{binom:G_n}
\prod\limits_{s=0}^{k_i}x_{i,2s+1}\prod\limits_{t=1}^{k_j}x_{j,2t} - \prod\limits_{s=1}^{k_i}x_{i,2s}\prod\limits_{t=0}^{k_j}x_{j,2t+1}; \hspace{0.3cm}1\leq i < j\leq n. 
\end{align}

Let $\prec$ be the graded lexicographic order on $R$ induced by the ordering of the variables 
\begin{align}\label{eq:lex} 
x_{1,1} \succ  x_{1,2} \succ\cdots \succ x_{1,2k_{1}+1} \succ\cdots \succ x_{n,1}  \succ x_{n,2} \succ \cdots  \succ x_{n,2k_{n}+1}.
\end{align}

From Theorem~\ref{Thm:GB} we have that the binomials in \eqref{binom:G_n} form a Gr\"{o}bner basis of $I_{\mathfrak{g}_{n}}$ with respect to the monomial order $\prec$. Hence the initial ideal $\ini_{\prec}( I_{\mathfrak{g}_{n}})$ of $I_{\mathfrak{g}_{n}}$ is generated by the squarefree monomials
\begin{align}\label{eq:initial}
\prod\limits_{s=0}^{k_i}x_{i,2s+1}\prod\limits_{t=1}^{k_j}x_{j,2t};\hspace{0.5cm} 1\leq i < j\leq n;\hspace{0.2cm} k_{i},k_{j}\in [m]. 
\end{align}

Since the monomial ideal $\ini_{\prec}( I_{\mathfrak{g}_{n}})$  is  squarefree, we can associate a Stanley--Reisner complex whose ideal coincides with the initial ideal generated by the monomials in \eqref{eq:initial}. 

\section{\texorpdfstring{The $h$-polynomial of $\mathbb{K}[\mathfrak{g}_{r_1, \dots , r_m}]$}{The h-polynomial of K[gn]}}\label{sec:h-poly}

In this section, we will explore specific constructions and key findings necessary to prove Theorem~\ref{thm:main}. 
Upon conclusion, we will utilize these insights to establish Theorem~\ref{thm:main}.

\subsection{\texorpdfstring{Stanley--Reisner complex $\Delta_{\mathfrak{g}_n}$}{Stanley--Reisner complex}}
Recall that $\Delta_{\mathfrak{g}_n}$ is the Stanley--Reisner complex of the initial ideal $\ini_{\prec}(I_{\mathfrak{g}_{n}})$.
Let $\mathcal{F}(\Delta_{\mathfrak{g}_n})$ be the set of all facets of $\Delta_{\mathfrak{g}_{n}}$.

For the graph $\mathfrak{g}_{n}$, let
\[\mathcal{O}_{i}\coloneqq \{x_{i,1},x_{i,3}, \dots, x_{i,2k_{i}+1}\};\hspace{0.2cm} \mathcal{E}_{i}\coloneqq \{x_{i,2},x_{i,4}, \dots, x_{i,2k_{i}}\},\] where $1\leq i\leq n$ and corresponding $k_{i}\in [m]$. 
Therefore by definition, any facet of $\Delta_{\mathfrak{g}}$ can be written as a maximal set that excludes the set $\mathcal{O}_{i}\cup \mathcal{E}_{j};\ 1\leq i < j\leq n.$ 
Note that $\mathcal{O}_{n}\cup \mathcal{E}_{1}\subset F$ for all $F\in \mathcal{F}(\Delta_{\mathfrak{g}_n})$. Hence, without loss of generality, let us express the facets without indicating any elements of $\mathcal{O}_{n}\cup \mathcal{E}_{1}$.

Now, let us get a more concrete representation for the facets in $\mathcal{F}(\Delta_{\mathfrak{g}_n})$. 
Note that if our graph $\mathfrak{g}_n$ consists of $n$ copies of $3$-cycles, then it is none other than the graph $\mathcal{G}_{n}$ illustrated in Figure~\ref{fig1}.
This family of graphs has been thoroughly examined in \cite{HN} and it was proved that any facet in this case is of the form 
\begin{align}\label{facets3-cyc}
\{z_{1},\dots ,z_{j-1}\}\cup \{x_{j,1},x_{j,3},\dots , x_{n-1,1},x_{n-1,3}\}\cup \{x_{2,2},\dots , x_{j,2}\},
\end{align}
where $z_{i}\in\{x_{i,1},x_{i,3}\}$ and $j=1,\dots , n.$

Using similar arguments from \cite{HN} concerning the explicit form of the facets, we can get a concrete representation for the facets of $\Delta_{\mathfrak{g}_n}$.
Let $F \in \mathcal{F}(\Delta_{\mathfrak{g}_{n}})$. We consider two cases.

\noindent 
$\textbf{\underline{Case 1:}}$ 
Let  $\mathcal{O}_{p}\subset F$ for some $1\leq p\leq n$.
By definition, facet $F$ does not contain the set $\mathcal{O}_{p}\cup \mathcal{E}_{q};\ 1\leq p < q\leq n$.
Therefore, $\mathcal{E}_{q}\not\subset F$ for all $q$ such that $ p< q\leq n$. Moreover, by maximality of $F$, every element of $\mathcal{E}_q$ excluding one will belong to $F$.

\noindent
$\textbf{\underline{Case 2:}}$ 
Some elements of $\mathcal{O}_{p}$ are contained in $F$, for any $1\leq p\leq n$. If there exists some $q$ with $p< q\leq n$ such that $\mathcal{E}_{q}\subset F$, then by the definition of facets, $\mathcal{O}_{p}\not\subset F$. In particular, by maximality, the facet $F$ contains every element of $\mathcal{O}_{p}$ except for one. On the other hand, if $\mathcal{E}_{q}\not\subset F$ for all $q$ with $p< q\leq n$, then by the maximality of $F$, we have $\mathcal{O}_{p}\subset F$.

Therefore, any facet $F\in\mathcal{F}(\Delta_{\mathfrak{g}_n})$ is of the form:
\begin{align}\label{eq:facets}
\bigcup\limits_{i=1}^{j-1}\zeta_{i}\cup \bigcup\limits_{i=2}^{j}\mathcal{E}_{i}\cup  \bigcup\limits_{i=j}^{n-1}\mathcal{O}_{i}\cup \bigcup\limits_{i=j+1}^{n}\omega_{i},
\end{align}
where $j=1,\dots , n$, such that $\zeta_{i}\in \binom{\mathcal{O}_{i}}{k_{i}}$, for all $ k_{i}\geq 1$ ($\zeta_i$ is a set containing any $k_{i}$ elements of $\mathcal{O}_{i}$), and 
$\omega_{i} \in \binom{\mathcal{E}_{i}}{k_{i}-1} $ for all $k_{i}\geq 2$. By default, let $\omega_i =\emptyset$ for any $k_{i}=1$.

\begin{remark} 
Let us express the facets in \eqref{facets3-cyc} using the generalized expression \eqref{eq:facets}. 
For the particular case of graph $\mathfrak{g}_{n}$ in Figure~\ref{fig1}, $k_{i}=1$ for all $1\leq i\leq n$ and thus $\bigcup_{i=j+1}^{n}\omega_{i}$ will not occur in the expression and 
\begin{itemize}
   \item $\{z_{1},\dots ,z_{j-1}\}$, where $z_{i}\in\{x_{i,1},x_{i,3}\}=\mathcal{O}_{i}$ corresponds to $\bigcup\limits_{i=1}^{j-1}\zeta_{i}$,
    \item $\{x_{j,1},x_{j,3},\dots , x_{n-1,1},x_{n-1,3}\}$ corresponds to $\bigcup\limits_{i=j}^{n-1}\mathcal{O}_{i}$ and 
    \item $\{x_{2,2},\dots , x_{j,2}\}$ corresponds to $\bigcup\limits_{i=2}^{j}\mathcal{E}_{i}$, where $\mathcal{E}_i =\{x_{i,2}\}$.
\end{itemize}
Therefore, for the family of graphs $\mathfrak{g}_{n}$ with $n$ copies of $3$-cycles, any facet is of the form: $\bigcup\limits_{i=1}^{j-1}\zeta_{i}\cup \bigcup\limits_{i=2}^{j}\mathcal{E}_{i}\cup  \bigcup\limits_{i=j}^{n-1}\mathcal{O}_{i}, \text{ where } j=1,\dots , n.$
\end{remark}

As a way to better comprehend the facets of $\Delta_{\mathfrak{g}_n}$, let's look at a running example.
\begin{example}
Let us consider the graph  $\mathfrak{g}_{3}\coloneqq \mathfrak{g}_{1,1,1} $ in Figure~\ref{figExample} with $k_1 =3$, $k_2 =2$, $k_3 =1$. 

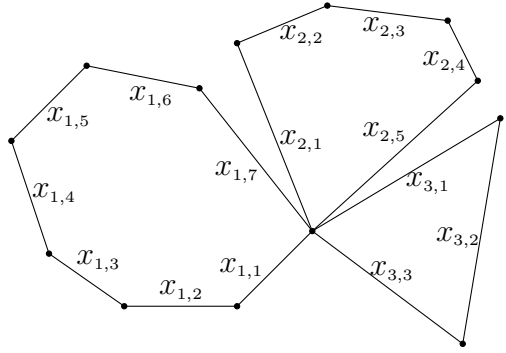
\begin{figure}[ht]
\centering
\begin{tikzpicture}
\draw[black, thin] (5,2) -- (4,1) -- (2.5,1) -- (1.5,1.7) -- (1,3.2) --  (2,4.2)-- (3.5,3.9) -- cycle;
\draw[black, thin] (5,2) -- (4,4.5) -- (5.2,5)-- (6.8,4.8) -- (7.2,4)-- cycle;
\draw[black, thin] (5,2) -- (7.5,3.5) -- (7,0.5)-- cycle;
\filldraw [black] (5,2) circle (1pt);
\filldraw [black] (4,1) circle (1pt);
\filldraw [black] (2.5,1) circle (1pt);
\filldraw [black] (1.5,1.7) circle (1pt);
\filldraw [black] (1,3.2) circle (1pt);
\filldraw [black] (2,4.2) circle (1pt);
\filldraw [black] (3.5,3.9) circle (1pt);
\filldraw [black] (4,4.5) circle (1pt);
\filldraw [black] (5.2,5) circle (1pt);
\filldraw [black] (6.8,4.8) circle (1pt);
\filldraw [black] (7.2,4) circle (1pt);
\filldraw [black] (7.5,3.5) circle (1pt);
\filldraw [black] (7,0.5) circle (1pt);
\filldraw [black] (4.5,1.5) node[anchor=east] {$x_{{\scriptscriptstyle 1,1}}$};
\filldraw [black] (3.7,1.2) node[anchor=east] {$x_{{\scriptscriptstyle 1,2}}$};
\filldraw [black] (1.57,2.2) node[anchor=south] {$x_{{\scriptscriptstyle 1,4}}$};
\filldraw [black] (2.6,1.6) node[anchor=east] {$x_{{\scriptscriptstyle 1,3}}$};
\filldraw [black] (2.2,3.5) node[anchor=east] {$x_{{\scriptscriptstyle 1,5}}$};
\filldraw [black] (3.3,3.78) node[anchor=east] {$x_{{\scriptscriptstyle 1,6}}$};
\filldraw [black] (4,2.5) node[anchor=south] {$x_{\scriptscriptstyle 1,7}$};
\filldraw [black] (5.3,3.2) node[anchor=east] {$x_{{\scriptscriptstyle 2,1}}$};
\filldraw [black] (5.3,4.6) node[anchor=east] {$x_{{\scriptscriptstyle 2,2}}$};
\filldraw [black] (6.4,4.7) node[anchor=east] {$x_{{\scriptscriptstyle 2,3}}$};
\filldraw [black] (7.2,4.25) node[anchor=east] {$x_{{\scriptscriptstyle 2,4}}$};
\filldraw [black] (6.4,3.3) node[anchor=east] {$x_{{\scriptscriptstyle 2,5}}$};
\filldraw [black] (6.1,2.64) node[anchor=west] {$x_{{\scriptscriptstyle 3,1}}$};
\filldraw [black] (6.94,2.2) node[anchor=north] {$x_{{\scriptscriptstyle 3,2}}$};
\filldraw [black] (6.5,1.45) node[anchor=east] {$x_{{\scriptscriptstyle 3,3}}$};
\end{tikzpicture}
\caption{The graph $\mathfrak{g}_{1,1,1}$ with $k_1 =3$, $k_2 =2$, $k_3 =1$}
\label{figExample}
\end{figure}

\noindent
For this graph, the initial ideal $\ini_{\prec}( I_{\mathfrak{g}_{3}})$ is 
\[\ini_{\prec}(I_{\mathfrak{g}_{3}}) = \langle
x_{\scriptscriptstyle{1,1}}x_{\scriptscriptstyle{1,3}}x_{\scriptscriptstyle{1,5}}x_{\scriptscriptstyle{1,7}}x_{\scriptscriptstyle{2,2}}x_{\scriptscriptstyle{2,4}}\ , \
x_{\scriptscriptstyle{1,1}}x_{\scriptscriptstyle{1,3}}x_{\scriptscriptstyle{1,5}}x_{\scriptscriptstyle{1,7}}x_{\scriptscriptstyle{3,2}}\ , \
x_{\scriptscriptstyle{2,1}}x_{\scriptscriptstyle{2,3}}x_{\scriptscriptstyle{2,5}}x_{\scriptscriptstyle{3,2}}
\rangle .\]
Note that since $k_3 = 1$, $\omega_3 =\emptyset$ in \eqref{eq:facets} for the graph $\mathfrak{g}_{3}$. As per \eqref{eq:facets}, the facets of $\Delta_{\mathfrak{g}_{3}}$ in Figure~\ref{figExample} is as follows:

\begin{equation*}
 \begin{aligned}
     & j=1 \ \ \ \ \underbrace{\{x_{\scriptscriptstyle{1,1}} , x_{\scriptscriptstyle{1,3}} , x_{\scriptscriptstyle{1,5}} , x_{\scriptscriptstyle{1,7}}\}}_{\mathcal{O}_1}\cup \underbrace{\{x_{\scriptscriptstyle{2,1}} , x_{\scriptscriptstyle{2,3}} , x_{\scriptscriptstyle{2,5}}\}}_{\mathcal{O}_2}\cup \underbrace{\binom{\{x_{\scriptscriptstyle{2,2}}, x_{\scriptscriptstyle{2,4}}\}}{1}}_{\omega_2}\\
     & j=2 \ \ \ \ \underbrace{\binom{\mathcal{O}_1}{3}}_{\zeta_1}\cup \underbrace{\{x_{\scriptscriptstyle{2,1}} , x_{\scriptscriptstyle{2,3}} , x_{\scriptscriptstyle{2,5}}\}}_{\mathcal{O}_2}\cup  \underbrace{\{x_{\scriptscriptstyle{2,2}}  , x_{\scriptscriptstyle{2,4}}\}}_{\mathcal{E}_2}\\     
     &j=3 \ \ \ \ \underbrace{\binom{\mathcal{O}_1}{3}}_{\zeta_1}\cup \underbrace{\binom{\mathcal{O}_2}{2}}_{\zeta_2}\cup \underbrace{\{x_{\scriptscriptstyle{2,2}}, x_{\scriptscriptstyle{2,4}}\}}_{\mathcal{E}_2}\cup
     \underbrace{\{x_{\scriptscriptstyle{3,2}}\}}_{\mathcal{E}_3}
 \end{aligned}
 \end{equation*} 
\end{example}

\begin{remark}
The Stanley--Reisner complex $\Delta_{\mathfrak{g}_{n}}$ is shellable and an extended version of the shelling order described in \cite{HN} can be used to get a shelling.
However, since the shellability is not used in the proofs of our main theorems, we omit the proof.   
\end{remark}

\subsection{Construction of graph \texorpdfstring{$\mathfrak{g}_{n}$}{gn} for the proof}\label{contruct}
Consider a graph $\mathfrak{g}_{n}^{\prime}$  with $n$ odd cycles $C_{i}^{\prime}$ such that length of each $C_{i}^{\prime}$ is $2k_{i}^{\prime}+1$, where $1\leq i\leq n$ and $k_{i}^{\prime}\in [m]$.
We construct the graph $\mathfrak{g}_{n}$ from $\mathfrak{g}_{n}^{\prime}$ by extending the odd cycle $C_{1}^{\prime}$ of $\mathfrak{g}_{n}^{\prime}$ by two edges while leaving the other odd cycles unchanged.
For illustration, see Figure \ref{fig:Con}.

\begin{figure}[ht]  
\centering 
\begin{tikzpicture}[scale=0.68]
\filldraw[color=black!100, fill=white!5, thin](2.2,3.1) ellipse (58pt and 20pt);
\draw[color=black!100, fill=white!5,  thin](6.16,3) ellipse (55pt and 20pt);
\filldraw[color=black!100, fill=white!5, thin](4.15,1.25) ellipse (15pt and 45pt);
\filldraw [black] (4.23,2.9) circle (5pt);
\filldraw [black] (2.4,3.8) circle (0.8pt);
\path (5.3,1.6) -- node[auto=false]{$\dots$} (4.8,2.9);
\filldraw [black] (2.2,2.39) circle (0.8pt);
\filldraw [black] (2.3,2) node[anchor=west] {\textcolor{red}{$x_{{\scriptscriptstyle  1,1}}$}};
\filldraw [black] (2.4,3.89) node[anchor=west] {\textcolor{red}{$x_{{\scriptscriptstyle  1,2k_{1}^{\prime}+1}}$}};
\filldraw [black] (5.5,3.1) node[anchor=west] {$C_{2}^{\prime}$};
\filldraw [black] (3.6,0.8) node[anchor=west] {$C_{n}^{\prime}$};
\filldraw [black] (3.5,3.2) node[anchor=east] {$C_{1}^{\prime}$};
\end{tikzpicture}
\begin{tikzpicture} [scale=0.90]
\filldraw[color=black!100, fill=white!5, thin](2.2,3.1) ellipse (58pt and 20pt);
\draw[color=black!100, fill=white!5,  thin](6.16,3) ellipse (55pt and 20pt);
\filldraw[color=black!100, fill=white!5, thin](4.15,1.25) ellipse (15pt and 45pt);
\filldraw [black] (4.23,2.9) circle (5pt);
\filldraw [black] (2.4,2.4) circle (0.8pt);
\filldraw [black] (0.7,3.57) circle (0.8pt);
\filldraw [black] (2.4,3.79) circle (0.8pt);
\filldraw [black] (3.3,3.68) circle (0.8pt);
\path (4.9,1.8) -- node[auto=false]{$\dots$} (4.8,2.9);
\filldraw [black] (2.83,2.3) node[anchor=west] {\textcolor{red}{$x_{{\scriptscriptstyle  1,1}}$}};
\filldraw [black] (0.6,3.98) node[anchor=west] {\textcolor{red}{$x_{{\scriptscriptstyle  1,2k_{1}^{\prime}+1}}$}};
\filldraw [black] (2.6,3.96) node[anchor=west] {\textcolor{red}{$y$}};
\filldraw [black] (3.5,3.7) node[anchor=west] {\textcolor{red}{$x$}};
\filldraw [black] (5.5,3.1) node[anchor=west] {$C_{2}^{\prime}$};
\filldraw [black] (3.8,0.8) node[anchor=west] {$C_{n}^{\prime}$};
\filldraw [black] (3.5,3.2) node[anchor=east] {$C_{1}$};
\filldraw [black] (1,2) node[anchor=east] {$\rightsquigarrow$};
\end{tikzpicture} 
\caption{Left: Graph $\mathfrak{g}_{n}^{\prime}$. Right: Graph $\mathfrak{g}_{n}$ } 
\label{fig:Con}  
\end{figure}
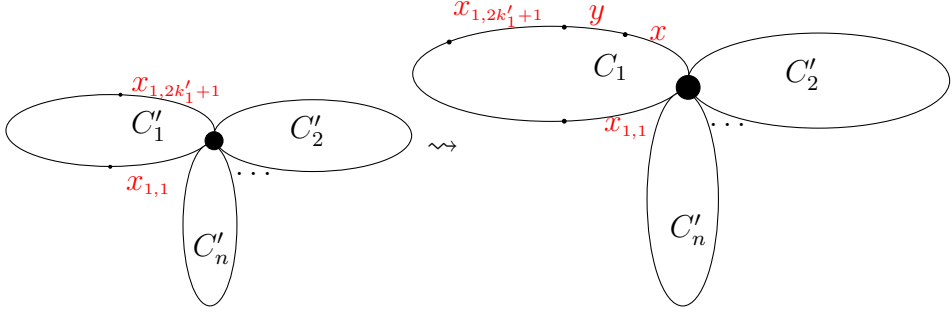  

For the graph $\mathfrak{g}_{n}^{\prime}$, we denote $\mathcal{O}_{i}^{\prime}\coloneqq \{x_{i,1},x_{i,3}, \dots, x_{i,2k_{i}^{\prime}+1}\}$; $\mathcal{E}_{i}^{\prime}\coloneqq \{x_{i,2},x_{i,4}, \dots, x_{i,2k_{i}^{\prime}}\}$ for all $1\leq i\leq n$, where $k_{i}^{\prime}\in [m]$. Accordingly, we denote $\zeta_{i}^{\prime}$ and $\omega_{i}^{\prime}$.
Recall that for all $1\leq i\leq n$, the cycles $C_i$ in $\mathfrak{g}_{n}$ are of length $2k_{i}+1$, where $k_{i}\in [m]$, and by construction we have  
\begin{itemize}
    \item $C_{i}=C_{i}^{\prime}$ for all $i\neq 1$,
    \item $k_{1}=  k_{1}^{\prime}+1$ and $k_{i} =  k_{i}^{\prime}$, for all $i\neq 1$.
    \item $x\coloneqq x_{1,2k_{1}+1}$ and $y\coloneqq x_{1,2k_{1}}$.
\end{itemize}

Let $\mathfrak{g}_{n}^{\prime\prime}\coloneqq \mathfrak{g}_{n}^{\prime}\backslash C_{1}^{\prime}$, the subgraph of $\mathfrak{g}_{n}^{\prime}$ which contains all the $C_{i}^{\prime}$ except $C_{1}^{\prime}$. 
Figure \ref{fig:G''} demonstrates the graph $\mathfrak{g}_{n}^{\prime\prime}$.
As per the construction, the cycles in $\mathfrak{g}_{n}^{\prime\prime}$ are $C_{i}^{\prime\prime}\coloneqq C_{i+1}^{\prime}$, with $k_{i}^{\prime\prime}\in [m]$ and $k_{i}^{\prime\prime}=k_{i+1}^{\prime}$, for all $1\leq i \leq n-1$. 
Moreover, $\mathcal{O}_{i}^{\prime\prime}$, $\mathcal{E}_{i}^{\prime\prime}$, $\zeta_{i}^{\prime\prime}$ and $\omega_{i}^{\prime\prime}$ are the corresponding notations.

\begin{figure}[ht]
\centering
\begin{tikzpicture}
\draw[color=black!100, fill=white!5,  thin](6.16,3) ellipse (55pt and 20pt);
\filldraw[color=black!100, fill=white!5, thin](4.15,1.25) ellipse (15pt and 45pt);
\filldraw [black] (4.23,2.9) circle (5pt);
\path (4.9,1.8) -- node[auto=false]{$\dots$} (4.8,2.9);
\filldraw [black] (5.5,3.1) node[anchor=west] {$C_{2}^{\prime}$};
\filldraw [black] (3.8,0.8) node[anchor=west] {$C_{n}^{\prime}$};
\end{tikzpicture}
\caption{The graph $\mathfrak{g}_{n}^{\prime\prime}$ }
\label{fig:G''}
\end{figure}

\subsection{Towards the Proof of Theorem~\ref{thm:main}}\label{subsec:proofrelated}
In this subsection, we re-examine the Stanley--Reisner complex $\Delta_{\mathfrak{g}_{n}}$ in light of our special construction of the graph $\mathfrak{g}_{n}$. 
Additionally, we explore feasible lattice points for any $\mathfrak{g}_n$. 
Using these insights and with an inductive approach, we prove Theorem~\ref{thm:main}.

For a simplicial complex $\Delta$ and any nonempty set $X$, let $\Delta * X$ represent the simplicial complex whose facets are of the form $F\cup X$, where $F\in \mathcal{F}(\Delta)$. 
According to our construction of the graph $\mathfrak{g}_{n}$, we have the following lemma.

\begin{lemma}\label{lemma:main}
For the graph $\mathfrak{g}_{n}$, we can express 
$\Delta_{\mathfrak{g}_{n}} = \Delta_{\mathfrak{g}_{n}^{\prime}}*\{x\}\cup \Delta^{\prime\prime}*\mathcal{O}_{1}^{\prime},$
where $\Delta^{\prime\prime}= \Delta_{\mathfrak{g}_{n}^{\prime\prime}} *  \mathcal{E}_{2}^{\prime}$.
\end{lemma}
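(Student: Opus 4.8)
The plan is to prove the asserted set equality by classifying the facets of $\Delta_{\mathfrak{g}_n}$ according to whether or not they contain the new odd edge $x = x_{1,2k_1+1}$, and matching each class against one of the two complexes on the right-hand side. Throughout I would use the explicit description \eqref{eq:facets} of $\mathcal{F}(\Delta_{\mathfrak{g}_n})$, together with the earlier observation that $\mathcal{E}_1$ (hence $y$) and $\mathcal{O}_n$ lie in every facet and may be suppressed. The two numerical facts from the construction that drive everything are $\mathcal{O}_1 = \mathcal{O}_1' \cup \{x\}$ and $\mathcal{E}_1 = \mathcal{E}_1' \cup \{y\}$, with $k_1 = k_1' + 1$, while $\mathcal{O}_i = \mathcal{O}_i'$, $\mathcal{E}_i = \mathcal{E}_i'$, and $k_i = k_i'$ for every $i \neq 1$.

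First I would treat the facets containing $x$. A facet with $j = 1$ contains all of $\mathcal{O}_1$, hence $x$; a facet with $j \geq 2$ contains $x$ exactly when $x \in \zeta_1$. Since $\zeta_1 \in \binom{\mathcal{O}_1}{k_1}$ omits a single element of the $(k_1+1)$-element set $\mathcal{O}_1$, the condition $x \in \zeta_1$ is equivalent to $\zeta_1 = \{x\} \cup \zeta_1'$ with $\zeta_1' \in \binom{\mathcal{O}_1'}{k_1'}$, which is precisely the datum attached to cycle $1$ in a facet of $\Delta_{\mathfrak{g}_n'}$ (note $|\mathcal{O}_1'| = k_1' + 1 = k_1$). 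Because all data attached to the cycles $i \neq 1$ coincide in $\mathfrak{g}_n$ and $\mathfrak{g}_n'$, deleting $x$ gives a bijection between the $x$-containing facets of $\Delta_{\mathfrak{g}_n}$ and the facets of $\Delta_{\mathfrak{g}_n'}$, the suppressed cone point $y \in \mathcal{E}_1$ accounting for $\mathcal{E}_1 = \mathcal{E}_1' \cup \{y\}$. Hence this class is exactly $\mathcal{F}(\Delta_{\mathfrak{g}_n'} * \{x\})$.

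Next I would treat the facets not containing $x$. Such a facet must have $j \geq 2$, and since $\zeta_1$ omits exactly one element of $\mathcal{O}_1$, excluding $x$ forces $\zeta_1 = \mathcal{O}_1 \setminus \{x\} = \mathcal{O}_1'$. After removing this fixed block $\mathcal{O}_1'$, the remaining data involve only the cycles $2, \dots, n$, and I would reindex them through the identification $C_i'' = C_{i+1}'$ (so $\mathcal{O}_{i+1} \leftrightarrow \mathcal{O}_i''$ and $\mathcal{E}_{i+1} \leftrightarrow \mathcal{E}_i''$) and set $j'' = j - 1 \in \{1, \dots, n-1\}$. Under this reindexing the stripped facet takes the standard $\mathfrak{g}_n''$-facet shape, except that the block $\mathcal{E}_2 \,(= \mathcal{E}_2' = \mathcal{E}_1'')$ is necessarily present because $j \geq 2$ forces $2 \in [2,j]$; reinstating precisely this block is what coning with $\mathcal{E}_2'$ does, i.e. passing to $\Delta'' = \Delta_{\mathfrak{g}_n''} * \mathcal{E}_2'$. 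This identifies the $x$-free facets with $\mathcal{F}(\Delta'' * \mathcal{O}_1')$.

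Finally I would assemble the two classes: they are disjoint (one contains $x$, the other does not) and together exhaust $\mathcal{F}(\Delta_{\mathfrak{g}_n})$ by \eqref{eq:facets}, while conversely every facet produced on the right-hand side is again of the form \eqref{eq:facets} and hence a genuine facet of $\Delta_{\mathfrak{g}_n}$; this yields the claimed equality of complexes. The main obstacle I anticipate is the bookkeeping of the suppressed cone points — in particular reconciling $\mathcal{E}_1 = \mathcal{E}_1' \cup \{y\}$ between $\mathfrak{g}_n$ and $\mathfrak{g}_n'$, and pinning down exactly why the $x$-free class requires coning $\Delta_{\mathfrak{g}_n''}$ with $\mathcal{E}_2'$ rather than with nothing — and ensuring that the correspondences carry facets to facets (not merely faces to faces), which anchoring every step to the explicit form \eqref{eq:facets} is designed to guarantee.
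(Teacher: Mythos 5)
Your proposal is correct and takes essentially the same route as the paper's proof: both classify the facets of form \eqref{eq:facets} according to whether they contain $x$ (equivalently, whether $\zeta_1=\{x\}\cup\zeta_1'$ or $\zeta_1=\mathcal{O}_1'$), identify the $x$-containing class with $\mathcal{F}(\Delta_{\mathfrak{g}_{n}^{\prime}}*\{x\})$, and identify the $x$-free class---after the reindexing $C_i''=C_{i+1}'$ and reinstating the forced block $\mathcal{E}_2'$---with $\mathcal{F}(\Delta''*\mathcal{O}_1')$ where $\Delta''=\Delta_{\mathfrak{g}_{n}^{\prime\prime}}*\mathcal{E}_2'$. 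If anything, your write-up is slightly more explicit than the paper's about why the dichotomy on $\zeta_1$ is exhaustive and about the bookkeeping of the suppressed cone points $\mathcal{E}_1$ and $y$.
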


\begin{proof}
With $k_{i}\in [m]$, the facets of $\Delta_{\mathfrak{g}_{n}}$ are of the form \eqref{eq:facets}.
By construction, we have $\mathcal{O}_{1}= \mathcal{O}_{1}^{\prime}\cup \{x\}$ and for all  $2\leq i\leq n$, $\mathcal{O}_{i}= \mathcal{O}_{i}^{\prime}$, $\mathcal{E}_{i}= \mathcal{E}_{i}^{\prime}$ and $\omega_{i}= \omega_{i}^{\prime}$.
Using this fact along with the general form of facets given in \eqref{eq:facets}, we have that any facet $F\in \mathcal{F}(\Delta_{\mathfrak{g}_{n}})$ has one of the following forms:
\begin{subequations}
  \begin{equation}
    \label{eq:a}
      \{x\}\cup\bigcup\limits_{i=1}^{j-1}\zeta_{i}^{\prime}\cup \bigcup\limits_{i=2}^{j}\mathcal{E}_{i}^{\prime}\cup  \bigcup\limits_{i=j}^{n-1}\mathcal{O}_{i}^{\prime}\cup \bigcup\limits_{i=j+1}^{n}\omega_{i}^{\prime}, \textrm{ where } 1\leq j \leq n,  \textrm{ or }
  \end{equation}
  \begin{equation}
    \label{eq:b}
    \mathcal{O}_{1}^{\prime}\cup\bigcup\limits_{i=2}^{j-1}\zeta_{i}^{\prime}\cup \bigcup\limits_{i=2}^{j}\mathcal{E}_{i}^{\prime}\cup  \bigcup\limits_{i=j}^{n-1}\mathcal{O}_{i}^{\prime}\cup \bigcup\limits_{i=j+1}^{n}\omega_{i}^{\prime}, \textrm{ where } 2\leq j \leq n.
  \end{equation}
\end{subequations}

Now, let us focus on the graph $\mathfrak{g}_{n}^{\prime\prime}$. 
Using \eqref{eq:facets} for $\mathfrak{g}_{n}^{\prime\prime}$, the facets of $\Delta_{\mathfrak{g}_{n}^{\prime\prime}}$ are of the form 
$\bigcup\limits_{i=1}^{j-1}\zeta_{i}^{\prime\prime}\cup \bigcup\limits_{i=2}^{j}\mathcal{E}_{i}^{\prime\prime}\cup  \bigcup\limits_{i=j}^{n-2}\mathcal{O}_{i}^{\prime\prime}\cup \bigcup\limits_{i=j+1}^{n-1}\omega_{i}^{\prime\prime},$ where $1\leq j\leq n-1$.
We know that the cycles in $\mathfrak{g}_{n}^{\prime\prime}$ are given by $C_{j}^{\prime\prime}\coloneqq C_{j+1}^{\prime}$, for all $1\leq j \leq n-1$. Thus, we have $\mathcal{E}_{j}^{\prime\prime}= \mathcal{E}_{j+1}^{\prime}$ and $\mathcal{O}_{j}^{\prime\prime}= \mathcal{O}_{j+1}^{\prime}$, for all $1\leq j \leq n-1$. 
Therefore, the facets of $\Delta_{\mathfrak{g}_{n}^{\prime\prime}}$ are of the form
$\bigcup\limits_{i=2}^{j}\zeta_{i}^{\prime}\cup \bigcup\limits_{i=3}^{j+1}\mathcal{E}_{i}^{\prime}\cup  \bigcup\limits_{i=j+1}^{n-1}\mathcal{O}_{i}^{\prime}\cup \bigcup\limits_{i=j+2}^{n}\omega_{i}^{\prime},$ where $1\leq j\leq n-1$.
Now, let us rewrite this expression such that the facets of $\Delta_{\mathfrak{g}_{n}^{\prime\prime}}$ are of the form 
\begin{align}\label{eq:facets''}
\bigcup\limits_{i=2}^{j-1}\zeta_{i}^{\prime}\cup \bigcup\limits_{i=3}^{j}\mathcal{E}_{i}^{\prime}\cup  \bigcup\limits_{i=j}^{n-1}\mathcal{O}_{i}^{\prime}\cup \bigcup\limits_{i=j+1}^{n}\omega_{i}^{\prime}, \textrm{ where } 2 \leq j \leq n.
\end{align}

Let $\Delta^{\prime\prime}$ be the simplicial complex whose facets are  
\begin{align}\label{eq:facetsDelta}
\bigcup\limits_{i=2}^{j-1}\zeta_{i}^{\prime}\cup \bigcup\limits_{i=2}^{j}\mathcal{E}_{i}^{\prime}\cup  \bigcup\limits_{i=j}^{n-1}\mathcal{O}_{i}^{\prime}\cup \bigcup\limits_{i=j+1}^{n}\omega_{i}^{\prime}, \textrm{ where } 2 \leq j \leq n.
\end{align}
From \eqref{eq:facets''} and \eqref{eq:facetsDelta}, it is evident that any facet of $\Delta^{\prime\prime}$ is of the form $F^{\prime\prime}\cup\mathcal{E}_{2}^{\prime}$, where $F^{\prime\prime}\in\mathcal{F}(\Delta_{\mathfrak{g}_{n}^{\prime\prime}})$. Therefore, we can express $\Delta^{\prime\prime}$ as
\begin{align}\label{eq:Delta''}
\Delta^{\prime\prime}= \Delta_{\mathfrak{g}_{n}^{\prime\prime}} *  \mathcal{E}_{2}^{\prime}.
\end{align}

Note that any facet $F^{\prime}\in\mathcal{F}(\Delta_{\mathfrak{g}_{n}^{\prime}})$ is of the form $\bigcup\limits_{i=1}^{j-1}\zeta_{i}^{\prime}\cup \bigcup\limits_{i=2}^{j}\mathcal{E}_{i}^{\prime}\cup  \bigcup\limits_{i=j}^{n-1}\mathcal{O}_{i}^{\prime}\cup \bigcup\limits_{i=j+1}^{n}\omega_{i}^{\prime},$ where $1\leq j \leq n$. 
Consequently, any facet of $\Delta_{\mathfrak{g}_{n}}$ of the form \eqref{eq:a} corresponds to $F^{\prime} \cup \{x\}$, where $F^{\prime}\in\mathcal{F}(\Delta_{\mathfrak{g}_{n}^{\prime}})$. 
It is thus possible to state that the facets in \eqref{eq:a} correspond to facets of $\Delta_{\mathfrak{g}_{n}^{\prime}}*\{x\}$.
By comparing \eqref{eq:b} and \eqref{eq:facetsDelta}, we observe that the collection of facets in \eqref{eq:b} matches $\Delta^{\prime\prime} *\mathcal{O}_1^{\prime}$.
Therefore, we have
$\Delta_{\mathfrak{g}_{n}}=\Delta_{\mathfrak{g}_{n}^{\prime}}*\{x\}\cup \Delta^{\prime\prime}*\mathcal{O}_{1}^{\prime}.$
\end{proof}

By examining the two forms of facets in $\mathcal{F}(\Delta_{\mathfrak{g}_{n}})$, it is evident that the intersection of any facet in \eqref{eq:a} with any other facet of \eqref{eq:b} is equal to some facet of $\Delta_{\mathfrak{g}_{n}^{\prime}}$. 
Hence we have 
\[\Delta_{\mathfrak{g}_{n}^{\prime}}*\{x\}\cap \Delta^{\prime\prime}*\mathcal{O}_{1}^{\prime}= \Delta_{\mathfrak{g}_{n}^{\prime}}.\]

Observe that $\mathcal{E}_{2}^{\prime}$ in \eqref{eq:Delta''} is just a simplex and therefore the $h$-polynomial of the Stanley--Reisner ring $\mathbb{K}[\Delta^{\prime\prime}]$ equals $h(\mathbb{K}[\Delta_{\mathfrak{g}_{n}^{\prime\prime}}]; t)$.
Lemma~\ref{lemma:main} leads us to the observation that all the simplicial complexes $\Delta_{\mathfrak{g}_{n}},\  \Delta_{\mathfrak{g}_{n}^{\prime}}*\{x\}$ and $\Delta^{\prime\prime}*\mathcal{O}_{1}^{\prime}$ have same dimension.
Furthermore, we have $\Delta_{\mathfrak{g}_{n}^{\prime}}*\{x\}\cap \Delta^{\prime\prime}*\mathcal{O}_{1}^{\prime}= \Delta_{\mathfrak{g}_{n}^{\prime}}$ and $\dim \Delta_{\mathfrak{g}_{n}^{\prime}}= \dim\Delta_{\mathfrak{g}_{n}}-1$.
Let us assume that $\dim \mathbb{K}[\Delta_{\mathfrak{g}_{n}}]=d$.

Since the Hilbert series of $R/I$ coincides with that of $R/\ini_{\prec}(I)$ in general (see, e.g., \cite{HHO}*{Proposition 2.6}), we conclude the $h$-polynomial of the Stanley--Reisner ring $\mathbb{K}[\Delta_{\mathfrak{g}_{n}}]$ gives the desired $h$-polynomial of corresponding edge ring $\mathbb{K}[\mathfrak{g}_{n}]$. 
Therefore by applying the inclusion-exclusion principle to the Hilbert series of $\mathbb{K}[\Delta_{\mathfrak{g}_{n}}]$ in Lemma~\ref{lemma:main}, we have
\[\frac{h(\mathbb{K}[\mathfrak{g}_{n}];t)}{(1-t)^{d}} = \frac{h(\mathbb{K}[\mathfrak{g}_{n}^{\prime}];t)}{(1-t)^{d}} + \frac{h(\mathbb{K}[\mathfrak{g}_{n}^{\prime\prime}];t)}{(1-t)^{d}}  - \frac{h(\mathbb{K}[\mathfrak{g}_{n}^{\prime}];t)}{(1-t)^{d-1}} .\]
As a result, the $h$-polynomials are as follows:
\begin{equation}\label{eq:hpoly}
\begin{split}
h(\mathbb{K}[\mathfrak{g}_{n}];t)&=h(\mathbb{K}[\mathfrak{g}_{n}^{\prime}];t) + h(\mathbb{K}[\mathfrak{g}_{n}^{\prime\prime}];t) - (1-t) h(\mathbb{K}[\mathfrak{g}_{n}^{\prime}];t) \\ 
 &= t\ h(\mathbb{K}[\mathfrak{g}_{n}^{\prime}];t) + h(\mathbb{K}[\mathfrak{g}_{n}^{\prime\prime}];t).
\end{split}
\end{equation}


For any graph $\mathfrak{g}_{n}=\mathfrak{g}_{r_1,\dots ,r_m}$ ($m\geq 1$), we can assign the lattice points $(n,N)$ where $n=\sum\limits_{j\geq1}r_{j}$ (total number of odd cycles) and $N=\sum\limits_{j\geq1} jr_{j}$, such that $N\geq  n \geq 1 $.
The feasible lattice points corresponding to any $\mathfrak{g}_{n}$ are shown in Figure~\ref{range}.

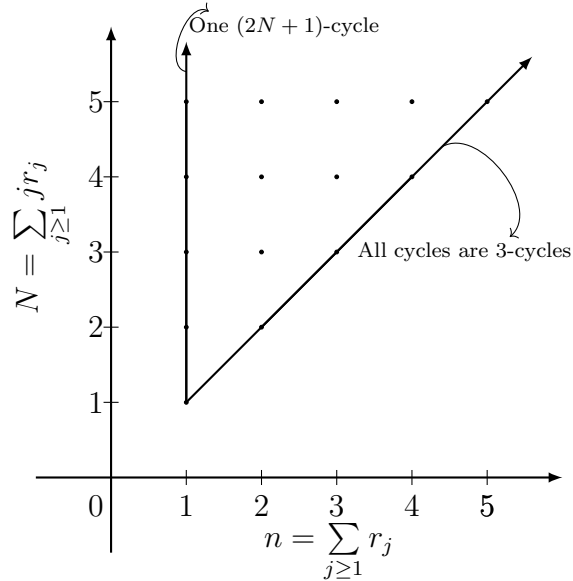
\begin{figure}[ht]  
\centering 
\begin{tikzpicture}
    [
        dot/.style={circle,draw=black, fill,inner sep=0.5pt},
    ]

\foreach \x in {1,...,5}{
    \node[dot] at (\x,5){ };
}
\foreach \x in {1,...,4}{
    \node[dot] at (\x,4){ };
}
\foreach \x in {1,...,3}{
    \node[dot] at (\x,3){ };
}
\foreach \x in {1,2}{
    \node[dot] at (\x,2){ };
}
\node[dot] at (1,1){ };

\foreach \x in {1,1}
    \draw[thick] (\x,1) -- (\x,5);
\draw[->,thick,-latex] (1,5) -- (1,5.8);
\foreach \x in {1,2}
    \draw[thick] (\x,\x) -- (\x+1,\x+1);
\foreach \x in {2,3}
    \draw[thick] (\x,\x) -- (\x+1,\x+1);
\foreach \x in {3,4}
    \draw[thick] (\x,\x) -- (\x+1,\x+1);
\draw[->,thick,-latex] (5,5) -- (5.6,5.6);    
\draw [->] (4.4,4.4) to [in=40,out=30] (5.3,3.2);
\node[xshift=-6mm] at (5.3,3) {{\tiny All cycles are $3$-cycles}}; 
\draw [->] (1,5.4) to [in=150,out=170] (1.3,6.2);
\node[xshift=-1mm] at (2.4,6) {{\tiny One $(2N+1)$-cycle}}; 

\foreach \x in {1,...,5}
    \draw (\x,.1) -- node[below,yshift=-1mm] {\x} (\x,-.1);
\node[below,yshift=-1mm] at (5,0) {5};
\node[below,xshift=-2mm,yshift=-1mm] at (0,0) {0};

\foreach \y in {1,...,5}
    \draw (.1,\y) -- node[xshift=-2mm] {\y} (-.1,\y);  
\node[xshift=-6mm] at (-0.3,3.2) { \rotatebox{90}{$N=\sum\limits_{j\geq 1}jr_{j}$}}; 
\node[xshift=-6mm] at (3.5,-1) { $n=\sum\limits_{j\geq 1}r_{j}$}; 
\draw[->,thick,-latex] (0,-1) -- (0,6);
\draw[->,thick,-latex] (-1,0) -- (6,0);
\end{tikzpicture}
\caption{Feasible lattice points corresponding to the graph $\mathfrak{g}_{r_1, \dots , r_m}$}
\label{range}
\end{figure}

Any graph $\mathfrak{g}_{n}$ with an arbitrary lattice point $(n, N)$ can be constructed from a base graph with $n$ copies of $3$-cycles, i.e., with lattice point $(n,n)$, by a similar method of construction as discussed in Section~\ref{contruct}.
Thus, we demonstrate our main theorem using an inductive approach over $N$ in the feasible lattice points $(n, N)$ with $N\geq n \geq 1$.


\begin{proof}[\textbf{Proof of Theorem \ref{thm:main}}]
As earlier, let $\mathfrak{g}_{n} \coloneqq \mathfrak{g}_{r_1, \dots , r_m}$ with each  $i^{\textrm{th}}$ cycle of length $2 k_{i}+1$, where $k_{i} \in [m]$.

For the base case with $N\geq n=1$, i.e., lattice points $(1, N)$, the edge ring is isomorphic to a polynomial ring in $2N+1$ (length of the single odd cycle) variables and the $h$-polynomial has to be $1$. 
Therefore, \eqref{eq:hvector} stands true.

As we can see in Figure~\ref{range}, the graph with $n$ copies of $3$-cycles having a unique common point, or $N=n\geq 1$, is another base case. In \cite{HN}, this case has been examined. 
Thus, \cite{HN}*{Theorem 1.1} implies that our main theorem is true for the base case $N=n\geq 1$.

Let us assume that the $h$-polynomial is given by \eqref{eq:hvector}, for any graph of our concern with corresponding lattice point $(n,N)$.

As per the discussion in Section~\ref{contruct}, we construct our graph $\mathfrak{g}_{n}$ from $\mathfrak{g}_{n}^{\prime}$ by altering one of the odd cycles of $\mathfrak{g}_{n}^{\prime}$.
Let $\mathfrak{g}_{n}^{\prime}$ be a graph with lattice point $(n,N)$ such that $n=\sum\limits_{j\geq1}r_{j}^{\prime}$ and $N=\sum\limits_{j\geq1} jr_{j}^{\prime}$.
By construction, the graph $\mathfrak{g}_{n}^{\prime\prime}$ has lattice point $(n-1,\widetilde{N})$, where $n-1=\sum\limits_{j\geq1}r_{j}^{\prime\prime}$ and $\widetilde{N}=\sum\limits_{j\geq1}jr_{j}^{\prime\prime} = \sum\limits_{j\geq1}jr_{j}^{\prime}-k_{{\scriptscriptstyle 1}}^{\prime}<N$.
Therefore by the induction hypothesis, the formula \eqref{eq:hvector} holds for both $\mathfrak{g}_{n}^{\prime}$ and $\mathfrak{g}_{n}^{\prime\prime}$.
Now, as per the construction, graph $\mathfrak{g}_{n}$ has a total of $n=\sum\limits_{j\geq1}r_{j}$ odd cycles such that 
\begin{align*}\sum\limits_{j\geq1} jr_{j} &=  k_{1}(r_{{\scriptscriptstyle k_{{\scriptscriptstyle 1}}}}^{\prime}+1) +  (k_{1}-1)(r_{{\scriptscriptstyle k_{{\scriptscriptstyle 1}}-1}}^{\prime}-1) + \sum\limits_{\substack{j\geq1 \\ j\neq k_{{\scriptscriptstyle 1}},k_{{\scriptscriptstyle 1}}-1}} jr_{j}^{\prime}\\
  &= \sum\limits_{j\geq1} jr_{j}^{\prime} + 1 = N+1.
\end{align*}
Therefore, the lattice point corresponding to $\mathfrak{g}_{n}$ is $(n,N+1)$.

From \eqref{eq:hpoly}, we have
\[h(\mathbb{K}[\mathfrak{g}_{n}];t) = t\ h(\mathbb{K}[\mathfrak{g}_{n}^{\prime}];t)+ h(\mathbb{K}[\mathfrak{g}_{n}^{\prime\prime}];t).\]
By the induction hypothesis, we apply \eqref{eq:hvector} to  $h(\mathbb{K}[\mathfrak{g}_{n}^{\prime}];t)$ and $h(\mathbb{K}[\mathfrak{g}_{n}^{\prime\prime}];t) $, and we have
\begin{align*}
  h(\mathbb{K}[\mathfrak{g}_{n}];t)&=  t\prod_{j\geq 1}(1+\dots +t^j)^{r_{j}^{\prime}}
   -t^2\prod_{j\geq 1}(1+\dots + t^{j-1})^{r_{j}^{\prime}}\\
  & \hspace{0.4cm} + \prod_{j\geq 1}(1+\dots +t^j)^{r_{j}^{\prime\prime}}
  -t\prod_{j\geq 1}(1+\dots + t^{j-1})^{r_{j}^{\prime\prime}}, 
\end{align*}
where $k\coloneqq k_{{\scriptscriptstyle 1}}^{\prime}$;  $r_{j}^{\prime}  =r_{j}^{\prime\prime}$ for $j\neq k$ and $r_{k}^{\prime} =r_{k}^{\prime\prime} +1$ (by construction). 
Let 
\begin{equation*}
    r_{j}\coloneqq
    \begin{cases}
    r_{j}^{\prime\prime} + 1  & \text{ for } j=  k+1, \\
    r_{j}^{\prime\prime}  & \textrm{ otherwise. }
    \end{cases}
\end{equation*}  
Then we have

\begin{align*}
   h(\mathbb{K}[\mathfrak{g}_{n}];t) &= t(1+\dots +t^{k})\prod_{j\geq 1}(1+\dots +t^j)^{r_{j}^{\prime\prime}}\\
   & \hspace{0.3cm}-t^{2}(1+\dots +t^{k-1})\prod_{j\geq 1}(1+\dots + t^{j-1})^{r_{j}^{\prime\prime}}\\
   & \hspace{0.3cm} + \prod_{j\geq 1}(1+\dots +t^j)^{r_{j}^{\prime\prime}} -t\prod_{j\geq 1}(1+\dots + t^{j-1})^{r_{j}^{\prime\prime}}\\
   & = (1+\dots +t^{k+1})\prod_{j\geq 1}(1+\dots +t^j)^{r_{j}^{\prime\prime}}\\
   & \hspace{0.3cm} -t(1+\dots +t^{k})\prod_{j\geq 1}(1+\dots + t^{j-1})^{r_{j}^{\prime\prime}}\\
   & = \prod_{j\geq 1}(1+\dots +t^j)^{r_{j}}
   -t\prod_{j\geq 1}(1+\dots + t^{j-1})^{r_{j}}.
\end{align*}

\noindent
This concludes our proof.
\end{proof}

\section{\texorpdfstring{On almost Gorensteinness of $\mathbb{K}[\mathfrak{g}_{r_1, \dots, r_m}]$}{On almost Gorensteinness of K[gn]}} \label{sec: Gorenstein}

Let $R$ be a normal Cohen–Macaulay homogeneous domain and $h(R)=(h_0,h_1,\ldots,h_s)$ be its $h$-vector. From \cite{S78}*{Theorem 4.4}, we know that $R$ is Gorenstein if and only if $h(R)$ is symmetric, i.e.,
$h_i = h_{s-i}$ for $i = 0, 1,\dots,s$. 

The notion of almost Gorenstein homogeneous rings was given by Goto--Takahashi--Taniguchi in \cite{GTT}, as a new class of graded rings between Cohen--Macaulay rings and Gorenstein rings. 
For further studies on almost Gorenstein homogeneous rings, see, e.g., \cites{H, HM2, MM}. Note that if $R$ is Gorenstein, then it is almost Gorenstein. 
We now characterize the almost Gorensteinness of our edge ring $\mathbb{K}[\mathfrak{g}_{r_1, \dots, r_m}]$.

From \cite{H}, let us recall a necessary and sufficient condition for a homogeneous domain to be almost Gorenstein. 
\begin{proposition}[{\cite{H}*{Corollary 2.7}}]\label{prop:alm}
Let $R$ be a Cohen--Macaulay homogeneous domain of dimension $d$ over a field $\mathbb{K}$ and let $h(R)=(h_0,h_1,\ldots,h_s)$ be its $h$-vector. 
Then $R$ is almost Gorenstein if and only if the following equality holds: 
\begin{align*}
\typ{(R)}-1=\sum_{i=0}^{s}((h_s+\cdots+h_{s-i})-(h_0+\cdots+h_i)) =:\tilde{e}(R), 
\end{align*}
where $\typ{(R)}$ denotes the Cohen--Macaulay type of $R$.
\end{proposition}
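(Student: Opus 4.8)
The plan is to deduce the stated numerical criterion directly from the Goto--Takahashi--Taniguchi definition \cite{GTT} of an almost Gorenstein graded ring, translating the relevant module-theoretic conditions into statements about the $h$-vector via Hilbert series. Recall that, writing $a=a(R)=s-d$ for the $a$-invariant and $\omega_R$ for the graded canonical module, $R$ is almost Gorenstein precisely when there is a graded exact sequence
\[
0\longrightarrow R \longrightarrow \omega_R(-a)\longrightarrow C\longrightarrow 0
\]
in which either $C=0$ (the Gorenstein case) or $C$ is a graded \emph{Ulrich} module, i.e.\ a Cohen--Macaulay module of dimension $d-1$ with $\mu(C)=e(C)$, where $\mu$ denotes the minimal number of generators and $e$ the multiplicity. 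First I would record the Hilbert series of the twisted canonical module: using the graded duality relation $\textup{HS}(\omega_R;t)=(-1)^d\,\textup{HS}(R;t^{-1})$ together with $\textup{HS}(R;t)=h(t)/(1-t)^d$, a direct computation gives
\[
\textup{HS}(\omega_R(-a);t)=\frac{h^{*}(t)}{(1-t)^d}, \qquad h^{*}(t)=\sum_{i=0}^s h_{s-i}\,t^i,
\]
so that $\omega_R(-a)$ has a nonzero component in degree $0$ of dimension $h_s\neq 0$.

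Next I would build the sequence explicitly. Since $R$ is a domain, both $R$ and $\omega_R(-a)$ are torsion-free of rank one, so for any nonzero $\xi\in(\omega_R(-a))_0$ the map $R\to\omega_R(-a)$, $1\mapsto\xi$, is injective; let $C$ be its cokernel. Comparing Hilbert series yields $\textup{HS}(C;t)=(h^{*}(t)-h(t))/(1-t)^d$, and since $h^{*}(1)=h(1)$ the numerator is divisible by $1-t$, whence $\dim C\le d-1$. The two invariants of $C$ entering the Ulrich condition can then be computed. On the one hand, $\xi$ lies in degree $0$, a degree in which $\mathfrak{m}\,\omega_R(-a)$ vanishes, so $\xi$ is a minimal generator and $\mu(C)=\mu(\omega_R)-1=\typ(R)-1$. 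On the other hand, writing $e(C)=\lim_{t\to 1}(1-t)^{d-1}\textup{HS}(C;t)=\lim_{t\to 1}(h^{*}(t)-h(t))/(1-t)$ and simplifying with $(h^{*})'(1)=s\,h(1)-h'(1)$, one obtains
\[
e(C)=2h'(1)-s\,h(1)=\sum_{k=0}^s(2k-s)h_k=\tilde e(R),
\]
where the last equality is the elementary rearrangement of the double sum defining $\tilde e(R)$.

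With these identities in hand the forward implication is immediate: if $R$ is almost Gorenstein then the Goto--Takahashi--Taniguchi sequence is graded, hence its structure map sends $1$ to a degree-$0$ element $\xi$ as above, and the Ulrich condition $\mu(C)=e(C)$ reads exactly $\typ(R)-1=\tilde e(R)$. (In the Gorenstein case $\typ(R)=1$, the vector $h$ is symmetric, and one checks directly that $\tilde e(R)=0$, so the equality again holds.) For the converse I would invoke the general inequality $\mu(C)\le e(C)$, valid whenever $C$ is Cohen--Macaulay of dimension $d-1$, which gives $\typ(R)-1\le\tilde e(R)$ with equality forcing $\mathfrak{m}C=QC$ for a minimal reduction $Q$, that is, the Ulrich property and hence almost Gorensteinness.

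The step I expect to be the main obstacle is precisely the Cohen--Macaulayness of $C$ in the converse direction: a priori a poorly chosen $\xi$ could yield a cokernel that is not Cohen--Macaulay of pure dimension $d-1$, in which case the inequality $\mu(C)\le e(C)$ need not hold and the translation breaks down. To handle this I would first pass to an infinite base field, which is harmless since it changes neither the $h$-vector, nor $\typ(R)$, nor the almost Gorenstein property, and then choose $\xi$ generically in the degree-$0$ component, using a prime-avoidance/general-element argument to ensure that $C$ is Cohen--Macaulay of dimension $d-1$. Once this is secured, the numerical equality $\typ(R)-1=\tilde e(R)$ upgrades $\mu(C)=e(C)$ to the Ulrich condition, producing the required exact sequence and completing the argument.
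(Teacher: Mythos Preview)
The paper does not give a proof of this proposition; it is cited from \cite{H} and used as an input in the proof of Theorem~\ref{thm:almostGoren}. There is therefore nothing in this paper to compare your argument against.

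Your outline is essentially the argument behind \cite{H}*{Corollary 2.7} and is correct, but the step you single out as the main obstacle is not one. Cohen--Macaulayness of $C$ is automatic for \emph{any} nonzero $\xi\in(\omega_R(-a))_0$, with no genericity required: both $R$ and $\omega_R(-a)$ are maximal Cohen--Macaulay $R$-modules, so the depth lemma applied to the short exact sequence $0\to R\to\omega_R(-a)\to C\to 0$ gives
\[
\operatorname{depth} C \ge \min\bigl(\operatorname{depth} R - 1,\ \operatorname{depth}\omega_R(-a)\bigr)=d-1,
\]
and combined with $\dim C\le d-1$ (which you already extracted from the Hilbert-series computation) this forces $C$, when nonzero, to be Cohen--Macaulay of dimension exactly $d-1$. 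The passage to an infinite base field is still sensible, but only to guarantee a linear system of parameters for $C$ so that the standard inequality $\mu(C)\le e(C)$ is available; once that is in place, your identities $\mu(C)=\typ(R)-1$ and $e(C)=\tilde e(R)$ complete the converse exactly as you indicate.
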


It is well known that $\typ{(R)}$ coincides with the number of elements in the minimal system of generators of the canonical module of $R$. In the case of $\mathfrak{g}_{r_1,\dots,r_m}$, we have the following. 

\begin{proposition}[{\cite{compact}*{Theorem 5.7}}]\label{prop:CMtype}
For any graph $\mathfrak{g}_{r_1,\dots,r_m}$ consisting of $n=\sum_{j=1}^{m}r_{j}$ cycles,  $\typ{(\mathbb{K}[\mathfrak{g}_{r_1, \dots, r_m}])} = n-1.$
\end{proposition}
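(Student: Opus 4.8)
The plan is to compute the Cohen--Macaulay type directly from the canonical module, using the fact (established earlier from the odd-cycle condition) that $\mathbb{K}[\mathfrak{g}_n]$ is a \emph{normal} Cohen--Macaulay domain. I would realize it as the affine semigroup ring $\mathbb{K}[S]$, where $S \subseteq \mathbb{Z}^{V(\mathfrak{g}_n)}$ is the monoid generated by the incidence vectors $\rho(e) = \mathbf{e}_v + \mathbf{e}_w$ of the edges $e = \{v,w\}$. Since $\mathfrak{g}_n$ is connected and non-bipartite, the group $\mathbb{Z}S$ is the index-two sublattice $\{\mathbf{a} \in \mathbb{Z}^{V} : \sum_v a_v \in 2\mathbb{Z}\}$, and normality gives $S = \mathbb{R}_{\geq 0}S \cap \mathbb{Z}S$. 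By the Danilov--Stanley description of the canonical module of a normal affine semigroup ring, the canonical module $\omega$ is spanned by the monomials $t^{\mathbf{a}}$ with $\mathbf{a}$ in the relative interior of the cone $\mathbb{R}_{\geq 0}S$, and $\typ(\mathbb{K}[\mathfrak{g}_n])$ equals the number of minimal generators of $\omega$, i.e.\ the number of lattice points $\mathbf{a} \in \operatorname{relint}(\mathbb{R}_{\geq 0}S) \cap \mathbb{Z}S$ that are minimal for the order $\mathbf{a} \preceq \mathbf{b} \Leftrightarrow \mathbf{b} - \mathbf{a} \in S$. Thus the whole problem reduces to the combinatorial one of enumerating these minimal interior points.

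The key intermediate step is an explicit inequality description of the edge cone $\mathbb{R}_{\geq 0}S$. Writing a cone element as $\mathbf{a} = \sum_e c_e\,\rho(e)$ with $c_e \geq 0$, the central vertex $u$ forces a global ``balance'' constraint coupling all the cycles, while each vertex $v \neq u$ contributes a positivity facet. In the base case where every cycle is a triangle (the graph $\mathcal{G}_n$ of Figure~\ref{fig1}), one finds that $\mathbf{a} = (a_u;\, a_{i,1}, a_{i,2})$ lies in the relative interior exactly when $a_{i,1}, a_{i,2} \geq 1$ and $\sum_i |a_{i,1} - a_{i,2}| < a_u < \sum_i (a_{i,1}+a_{i,2})$, with $\sum_v a_v$ even. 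Here I would exhibit the $n-1$ points $P_k := (2k;\, 1, \dots, 1)$ for $k = 1, \dots, n-1$: a direct check shows they are pairwise $\preceq$-incomparable, that no interior point lies strictly below any $P_k$, and --- via the arithmetic interval on $k$ obtained by imposing $\mathbf{b} - P_k \in \mathbb{R}_{\geq 0}S$ --- that every interior point dominates some $P_k$. This yields exactly $n-1$ minimal generators in the triangle case.

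For arbitrary cycle lengths I would proceed either by repeating this analysis with the general facet description (each odd cycle of length $2k_i+1$ replaces the single balance term by a longer alternating one, but the count of minimal interior points should again be governed only by the number $n$ of cycles through $u$), or, more economically, by leveraging the construction of Section~\ref{contruct}: the passage $\mathfrak{g}_n^{\prime} \rightsquigarrow \mathfrak{g}_n$ lengthens a single cycle by two edges and preserves the number $n$ of cycles, so it would suffice to prove that the type is invariant under this operation and then invoke the triangle base case.

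The main obstacle is precisely this last point. The facet structure of the edge cone genuinely changes when a cycle is lengthened, so neither the invariance of the type nor the ``covering'' argument (that every interior lattice point dominates one of $n-1$ distinguished minimal points) is automatic; both demand careful bookkeeping of the interior inequalities for general $k_i$. This is presumably where the bulk of the work in \cite{compact}*{Theorem 5.7} lies, and it is the step I would expect to be the most delicate.
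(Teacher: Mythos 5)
First, a point of comparison that matters for your attempt: the paper itself contains \emph{no} proof of this proposition --- it is imported verbatim from \cite{compact}*{Theorem 5.7} --- so your argument cannot be measured against an internal one; it has to stand alone. Your framework is the standard and correct one: by the odd-cycle condition, $\mathbb{K}[\mathfrak{g}_n]$ is a normal affine semigroup ring, the Danilov--Stanley theorem identifies the canonical module $\omega$ with the span of monomials whose exponents lie in $\operatorname{relint}(\mathbb{R}_{\geq 0}S)\cap\mathbb{Z}S$, and $\typ(\mathbb{K}[\mathfrak{g}_n])=\mu(\omega)$ counts the $\preceq$-minimal interior lattice points. Your triangle-case analysis is also essentially right: cone membership decouples into per-triangle conditions $|a_{i,1}-a_{i,2}|\le s_i\le a_{i,1}+a_{i,2}$ on the contributions $s_i$ to the central coordinate, the points $P_k=(2k;1,\dots,1)$, $1\le k\le n-1$, are interior and pairwise incomparable, and the covering claim reduces to $\sum_i\min(b_{i,1},b_{i,2})\ge n$ together with a parity check.

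The genuine gap is that this is not the proposition: the all-triangle case is exactly the case already settled in \cite{HN}, and the content of the statement is arbitrary odd cycle lengths, for which you offer two strategies, both unexecuted --- as you yourself concede. Your fallback strategy (invariance of the type under the lengthening move of Section \ref{contruct}) cannot be salvaged from anything in the paper: Section \ref{sec:h-poly} only produces the recursion for $h$-polynomials via Lemma \ref{lemma:main}, and the Cohen--Macaulay type is not a function of the $h$-vector, so no invariance follows formally; one would have to track canonical-module generators through that decomposition, which is new work. The direct strategy is in fact viable, but only after the bookkeeping you deferred: parametrizing the edge weights of cycle $i$ by $t=c_{i,1}$, one gets $s_i=2t+\sum_l(-1)^l a_{i,l}$ with $t$ ranging in an interval cut out by the partial alternating sums $\sigma_{i,j}=\sum_{l\le j}(-1)^{j-l}a_{i,l}$, and the key computation is that replacing every inner coordinate $a_{i,l}$ by $a_{i,l}-1$ leaves $s_i^{\min}$ unchanged while decreasing $s_i^{\max}$ by exactly $2$; combined with parity, this is what yields the covering by the $n-1$ points $P_k$ in general. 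That this requires proof rather than analogy is shown already by $n=1$: there the unique minimal interior point is $(2;2,\dots,2)$, not of your all-ones form (and indeed the formula $\typ=n-1$ fails at $n=1$, so the statement implicitly assumes $n\ge 2$), so the structure of minimal interior points genuinely depends on the cycle lengths. As written, your proposal establishes the proposition only for $N=n$.
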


Now we are in the position to provide a proof for the second main theorem.

\begin{proof}[\textbf{Proof of Theorem~\ref{thm:almostGoren}}]
As before, let $\mathfrak{g}_{n} \coloneqq \mathfrak{g}_{r_1, \dots, r_m}$. 
We know that for $n=1$, the edge ring $\mathbb{K}[\mathfrak{g}_{n}]$ is isomorphic to a polynomial ring in $2N+1$ variables.  
For the case $n=2$, the toric ideal $I_{\mathfrak{g}_{n}}$ has only one generator and therefore the corresponding edge ring is a hypersurface.
Thus, the edge ring $\mathbb{K}[\mathfrak{g}_{n}]$ is Gorenstein for both $n=1,2$; which implies that $\mathbb{K}[\mathfrak{g}_{n}]$ is almost Gorenstein.

Now, let $n\geq 3$.
From Proposition~\ref{prop:alm} and Proposition~\ref{prop:CMtype}, we have that $\mathbb{K}[\mathfrak{g}_{n}]$ is almost Gorenstein if and only if $\tilde{e}(\mathbb{K}[\mathfrak{g}_{n}]) = n-2$. 
Let $h(\mathbb{K}[\mathfrak{g}_{n}])= (h_0,h_1,\ldots,h_s)$ be the $h$-vector of $\mathbb{K}[\mathfrak{g}_{n}]$ and let $h_{i}^{\prime} = (h_s+\cdots+h_{s-i})-(h_0+\cdots+h_i)$, for $i=0,1,\dots,s$. 
From the proof of \cite{H}*{Proposition 2.4}, we have 
\begin{equation}\label{eq:alm_h-vector}
\sum_{i=0}^{s}  h_{i}^{\prime} t^{i} = \frac{1}{1-t}\sum_{i=0}^{s}  (h_{s-i}- h_{i}) t^{i}.
\end{equation}
From Theorem~\ref{thm:main}, we have $s= \deg h(\mathbb{K}[\mathfrak{g}_{n}]; t)=\sum_{j=1}^{m} jr_{j} =N$.
For $f(t)=\sum_{i=0}^{s}a_{i}t^{i}$, we have $t^{s}f(t^{-1})=\sum_{i=0}^{s}a_{s-i}t^{i}$.
Applying this to \eqref{eq:alm_h-vector}, we can express 
\[\sum\limits_{i=0}^{s}  h_{i}^{\prime} t^{i} = \frac{t^{N}h(\mathbb{K}[\mathfrak{g}_{n}]; t^{-1})- h(\mathbb{K}[\mathfrak{g}_{n}]; t)}{1-t}.\]
By \eqref{eq:hvector}, we have \[t^{N}h(\mathbb{K}[\mathfrak{g}_{n}]; t^{-1})= t^{N}\prod_{j=1}^m(1+\dots +t^{-j})^{r_j} - t^{N-1}\prod_{j=1}^m(1+\dots + t^{-(j-1)})^{r_j}.\]
For $N=\sum\limits_{j=1}^{m}jr_{j}$, we can express $N-1$ as $\sum\limits_{j=1}^{m}(j-1)r_{j} + \sum\limits_{j=1}^{m}r_{j} -1 $. Therefore,
\begin{align*}
  t^{N}h(\mathbb{K}[\mathfrak{g}_{n}]; t^{-1}) &=  t^{N}\prod_{j=1}^{m} (1+\dots +t^{-j})^{r_j} - t^{n-1}\prod_{j=1}^m t^{(j-1)r_{j}}\big(1+\dots + t^{-(j-1)}\big)^{r_j}\\
  &= \prod_{j=1}^m(1+\dots +t^{j})^{r_j} - t^{n-1}\prod_{j=1}^m(1+\dots + t^{j-1})^{r_j}.
\end{align*}
Hence, we have
\begin{align*}
t^{N}h(\mathbb{K}[\mathfrak{g}_{n}]; t^{-1})- h(\mathbb{K}[\mathfrak{g}_{n}]; t) & = \prod_{j=1}^m(1+\dots +t^{j})^{r_j} - t^{n-1}\prod_{j=1}^m(1+\dots + t^{j-1})^{r_j}\\
& \hspace{0.4cm}- \prod_{j=1}^m(1+\dots +t^{j})^{r_j} + t\prod_{j=1}^m(1+\dots + t^{j-1})^{r_j} \\
& = t(1-t^{n-2}) \prod_{j=1}^m(1+\dots + t^{j-1})^{r_j} . 
\end{align*}

Consequently, \eqref{eq:alm_h-vector} becomes
\begin{align*}
  \sum\limits_{i=0}^{s}  h_{i}^{\prime} t^{i}  & =   \frac{t(1-t^{n-2}) \prod\limits_{j=1}^m(1+\dots + t^{j-1})^{r_j} }{1-t}\\
  &= t (1+t+\cdots +t^{n-3})\prod\limits_{j=1}^m(1+\dots + t^{j-1})^{r_j}. 
\end{align*}
We see that \eqref{eq:alm_h-vector} evaluated at $t=1$ gives $\tilde{e}(\mathbb{K}[\mathfrak{g}_{n}])$.
Thus, $\tilde{e}(\mathbb{K}[\mathfrak{g}_{n}]) = (n-2) \prod\limits_{j=1}^{m}j^{r_j} $.
Recall that for $n\geq 3$, the edge ring $\mathbb{K}[\mathfrak{g}_{n}]$ is almost Gorenstein if and only if $\tilde{e}(\mathbb{K}[\mathfrak{g}_{n}])= n-2$.
That is when $\prod\limits_{j=1}^{m}j^{r_j}= 1$. This occurs exclusively when $r_{j}=0$ for all $j>1$. 
In other words, every cycle in $\mathfrak{g}_{n}$ is a $3$-cycle ($N=n$). 
\end{proof}

From Theorem~\ref{thm:almostGoren} in conjunction with \cite{HN}*{Theorem 1.1}, the following result on $\mathbb{K}[\mathfrak{g}_{r_1, \dots, r_m}]$ is derived.

\begin{corollary}\label{coro:Gorenstein}
For $\mathfrak{g}_{r_1,\dots ,r_m}$ with $n=\sum\limits_{j=1}^{m}r_{j}$ and $N=\sum\limits_{j=1}^{m} jr_{j}$, the edge ring $\mathbb{K}[\mathfrak{g}_{r_1, \dots, r_m}]$ is almost Gorenstein but not Gorenstein if and only if $n\geq3$ and $N=n$. 
\end{corollary}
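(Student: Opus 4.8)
The plan is to combine the almost-Gorenstein classification of Theorem~\ref{thm:almostGoren} with Stanley's criterion \cite{S78}*{Theorem 4.4}: a normal Cohen--Macaulay homogeneous domain is Gorenstein if and only if its $h$-vector is symmetric. Since $\mathbb{K}[\mathfrak{g}_n]$ is such a ring, I would read the desired statement set-theoretically --- the graphs that are almost Gorenstein but not Gorenstein are exactly those in the almost-Gorenstein locus of Theorem~\ref{thm:almostGoren} whose $h$-vector fails to be symmetric --- and then pin down that subregion explicitly.

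First I would recall that, by Theorem~\ref{thm:almostGoren}, the almost-Gorenstein locus is the union of the two families $\{n \leq 2\}$ and $\{n \geq 3,\ N = n\}$. For $n = 1$ the edge ring is a polynomial ring and for $n = 2$ it is a hypersurface (as already observed in the proof of Theorem~\ref{thm:almostGoren}); both are Gorenstein. Hence no graph with $n \leq 2$ is almost Gorenstein without also being Gorenstein, and the problem reduces to the family $\{n \geq 3,\ N = n\}$, i.e., those graphs all of whose cycles are $3$-cycles.

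Next I would test symmetry on this remaining family. Here $r_1 = n$ and $r_j = 0$ for $j \geq 2$, so specializing the $h$-polynomial of Theorem~\ref{thm:main} (equivalently, invoking \cite{HN}*{Theorem 1.1}) yields $h(\mathbb{K}[\mathfrak{g}_n]; t) = (1+t)^n - t$. The associated $h$-vector is $(1,\ n-1,\ \binom{n}{2},\ \dots,\ \binom{n}{n-1},\ 1)$ with $s = N = n$. For $n \geq 3$ the index $n - 1$ is at least $2$, so the $-t$ term does not perturb $h_{n-1}$, giving $h_{n-1} = \binom{n}{n-1} = n$, whereas $h_1 = \binom{n}{1} - 1 = n - 1$. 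Thus $h_1 \neq h_{n-1}$, the $h$-vector is not symmetric, and by Stanley's criterion $\mathbb{K}[\mathfrak{g}_n]$ is not Gorenstein throughout this family.

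Assembling these observations settles both implications simultaneously. If $n \geq 3$ and $N = n$, then $\mathbb{K}[\mathfrak{g}_n]$ is almost Gorenstein by Theorem~\ref{thm:almostGoren} and not Gorenstein by the symmetry computation. Conversely, an almost-Gorenstein $\mathbb{K}[\mathfrak{g}_n]$ lies in $\{n \leq 2\} \cup \{n \geq 3,\ N = n\}$; since the first piece is entirely Gorenstein, any ring that is almost Gorenstein but not Gorenstein must satisfy $n \geq 3$ and $N = n$. The only nonroutine ingredient is the symmetry test, and even that reduces to a single comparison of binomial coefficients; I anticipate no real obstacle, the one point requiring care being that the $-t$ term lowers only the degree-one coefficient of $h$, which is precisely what destroys the symmetry for $n \geq 3$.
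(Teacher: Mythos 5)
Your proposal is correct and follows essentially the paper's own route: the paper derives the corollary from Theorem~\ref{thm:almostGoren} together with the $h$-polynomial $(1+t)^n - t$ of the all-$3$-cycle case (\cite{HN}*{Theorem 1.1}, equivalently the specialization of Theorem~\ref{thm:main}), with non-Gorensteinness detected exactly as you do, via Stanley's symmetry criterion. Your write-up simply makes explicit the symmetry check ($h_1 = n-1 \neq n = h_{n-1}$ for $n \geq 3$) and the observation that the $n \leq 2$ cases are Gorenstein, both of which the paper leaves implicit.
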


\section*{Acknowledgements}
Work on this project began while the first named author was a JSPS International Research Fellow at Osaka University, with support provided by JSPS and Mitacs. The second named author is partially supported by KAKENHI 20K03513 and 21KK0043.

\bibliography{main}

\end{document}